\newtheorem{thm}{Theorem}[section]
\newtheorem{prob}{Problem}[section]
\newtheorem{assumption}{Assumption}[section]
\newtheorem{remark}{Remark}[section]
\newtheorem{prop}{Proposition}[section]
\newtheorem{defn}{Definition}[section]
\newtheorem{example}{Example}[section]
\newcounter{nextauthor}
\def\mathrm{\mbox}
\numberwithin{remark}{section}
\begin{document}
\title{{\Large \bf Stochastic Linear-quadratic Control Problems with Affine Constraints}\thanks{This work was supported by the National Natural Science Foundation of China (11671282, 12171339)  and the grant from Chongqing Technology and Business University (2356004).}}
\author[a]{Zhun Gou}
\author[b]{Nan-Jing Huang \thanks{Corresponding author: nanjinghuang@hotmail.com; njhuang@scu.edu.cn}}
\author[a]{Xian-Jun Long}
\author[c]{Jian-Hao Kang}
\affil[a]{\small\it College of Mathematics and Statistics, Chongqing Technology and Business University, Chongqing 400067, P.R. China; Chongqing Key Laboratory of Statistical Intelligent Computing and Monitoring, Chongqing Technology and Business University, Chongqing, 400067, P.R. China}
\affil[b]{Department of Mathematics, Sichuan University, Chengdu, Sichuan 610064, P.R. China}
\affil[c]{School of Mathematics, Southwest Jiaotong University, Chengdu, Sichuan 610031, P.R. China}
\date{}
\maketitle
\begin{center}
\begin{minipage}{5.6in}
\noindent{\bf Abstract.} This paper investigates the stochastic linear-quadratic control problems with affine constraints, in which both equality and inequality constraints are involved. With the help of the Pontryagin maximum principle and  Lagrangian duality theory, the dual problem of original problem is established and the state feedback form of the solution to the optimal control problem is obtained.  Under the Slater condition, the equivalence is proved between the solutions to the original problem and the ones of the dual problem, and the KKT condition is also provided for solving original problem. Especially, a new sufficient condition is given for the invertibility assumption, which ensures the uniqueness of the solutions to the dual problem.
\\ \ \\
{\bf Keywords:} Stochastic control; Affine constraint; Feedback expression; KKT condition.
\\ \ \\
{\bf 2020 Mathematics Subject Classification}: 49N10, 90C46, 93B52, 93E20.
\end{minipage}
\end{center}

\section{Introduction}
\paragraph{}

Linear-quadratic control problems (LQ problems for short) constitute the most significant part of control theory, and have been widely investigated in the literature for a variety of deterministic control systems \cite{cai2022time, li2021irregular} because of the following two reasons: (i) with the help of Riccati equations, we are able to find the feedback expressions for solutions (or optimal feedback controls) to the LQ problems, which exhibit elegant properties due to their simple and nice structures. Furthermore, the feedback expression are very useful and particularly important in many practical control problems since they keep the corresponding control to be robust with respect to (small) perturbation \cite{lu2023optimal}. (ii) in lots of nonlinear control problems, it is really hard or even impossible to find the feedback expressions, while many of them can be reasonably approximated by the LQ problems \cite{Yong1999Stochastic}.

Stochastic linear-quadratic control problems (SLQ problems for short), as important extensions of the LQ problems, can date back to the pioneered work \cite{wonham1968matrix}. Since then SLQ problems have attracted more and more researchers and remain one of the most fascinating topics in control theory \cite{cohen2018online, moon2021dynamic, wei2021infinite, tang2015dynamic, Tuffaha2023the, meng2014general}. On the other hand, for some practical reasons, one has to consider constraints on the state and control when dealing with stochastic control problems, i.e., the admissible control sets in SLQ problems with constraints are smaller than the ones (the whole space) in SLQ problems with no constraints. Various type constraints have been considered in SLQ problems in the existing works,  such as linear equality constraints \cite{zhang2023stochastic}, linear inequality constraints \cite{Wu2020On}, quadratic constraints \cite{lim1999stochastic}, terminal state constraints \cite{bank2018linear}, expectation constraints \cite{chow2020dynamic}. It is also worth mentioning that static mean-$\rho$ portfolio selection problems were investigated in the sense of both expectation constraints and risk constraints in \cite{herdegen2022mean}. Such constraints "shrink" original admissible control sets and make the stochastic control problem more complicated and more challengeable (for discrete-time LQ problems with affine constraints, we refer to \cite{mohapatra2023linear} and references therein). Nevertheless, we show by Examples \ref{24} and \ref{25} that many "shrunken" admissible control sets can be approximatively described by original admissible control sets with finite many affine-type constraints, in which the coefficients are random. Thus it would be interesting and significant to study the stochastic linear-quadratic control problems with affine constraints (SLQ-AC problems for short) in random coefficients case. However, to the best of our knowledge, there are few works studying SLQ-AC problems with random coefficients.

The overarching goal of this paper is to study the SLQ-AC problems with random coefficients, in which   equality constraints and inequality constraints are considered. There are two main contributions of this paper:
\begin{enumerate}
\item From the viewpoint of methodology, by constructing backward stochastic differential equations (BSDEs for short), we  rewrite the affine constraints as some constraints depending only on the control variable, and so the Slater condition can be more easily verified.
\item From the viewpoint of conclusions, a sufficient condition is given for the invertibility assumption, which ensures the uniqueness of the solutions to the dual problem. To the best of our knowledge, the result is new without requiring  additional controllability condition.
\end{enumerate}

The rest of this paper is structured as follows. The next section introduces some necessary preliminaries including the formulation of the SLQ problems with affine constraints and two motivating examples. In Section 3, by using the Pontryagin maximum principle, we obtain the formulation of the dual problem to which the solutions are as well the ones to original SLQ problems with affine constraints under the Slater condition, and make the invertibility assumption for ensuring the Slater condition and the uniqueness of solutions to the dual problem. Then we give the KKT condition for solving the dual problem. Before concluding this paper in Section 5, we provide a sufficient condition for guaranteeing the invertibility assumption, and give two examples to show the effectiveness of our main results.
\section{Preliminaries}
In this section, we introduce some basic notations and present the formulation of the SLQ problems with affine constraints. Then we give two motivating examples for studying such problems.
\subsection{Problem Formulation}

Let $\mathbb{R}^{n\times m}$ be the Euclidean space of  $n\times m$-matrices $\Xi$ equipped with the norm $\|\Xi\|_{\mathbb{R}^{n\times m}}=tr^{\frac{1}{2}}(\Xi^{\top}\Xi)$ and inner product $\langle\Xi,\widetilde{\Xi}\rangle_{\mathbb{R}^{n\times m}}=tr(\Xi^{\top}\widetilde{\Xi})$ ($\widetilde{\Xi}\in \mathbb{R}^{n\times m}$), where $tr(\cdot)$ and $(\cdot)^{\top}$ represent the trace of an $n\times n$-matrix and the  transpose of an $n\times m$-matrix, respectively. Let $\mathcal{I}_n$ be the $n\times n$-identity matrix, $0^{m\times n}$ be an $m\times n$-null matrix, $\mathcal{S}^n$ the subset of symmetric matrices of $\mathbb{R}^{n\times n}$, $\mathcal{S}_{+}^n$ the subset of positive definite  matrices of $\mathcal{S}^n$, and $\mathcal{S}_{++}^n$ the subset of strictly positive definite  matrices of $\mathcal{S}_{+}^n$.

Consider the controlled linear stochastic differential equation (state equation) in a filtered probability space $(\Omega,\mathfrak{F},\mathfrak{F}_t,\mathbb{P})$ satisfying the usual hypothesis as follows:
\begin{equation}\label{SDE1}
\begin{cases}
dX(t)=\left[A(t)X(t)+B(t)u(t)\right]dt+\left[C(t)X(t)+D(t)u(t)\right]dB(t), \quad t\in[s,T],\\
X(s)=\xi\in \mathbb{R}^n.
\end{cases}
\end{equation}
where $[s,T]$ denotes a finite time duration with $T>s \geq0$; $X(\cdot)$ is the $\mathbb{R}^n$-valued state variable determined by the $\mathbb{R}^m$-valued control variable $u(\cdot)$; $B(t)$ is a standard one-dimensional Brownian motion which generates the right-continuous and increasing $\sigma$-algebra $\mathcal{F}=(\mathcal{F}_t)_{t\geq0}$; $A,C:[0,T]\times\Omega\rightarrow\mathbb{R}^{n\times n}$, $B,D\in[0,T]\times\Omega\rightarrow\mathbb{R}^{n\times m}$ are all bounded and $\mathcal{F}$-progressively measurable coefficients. For any given Euclidean space $\mathcal{H}$ and $p,q\in[1,+\infty)$, we make use of the following notations throughout this paper.
\begin{itemize}
\item $\mathcal{L}^p([s,T],\mathcal{H})$: the space of all measurable functions $f:[s,T]\rightarrow \mathcal{H}$ with $\int_s^T\|f(t)\|^p_{\mathcal{H}}dt<\infty$.

\item $\mathcal{L}^p_{\mathcal{F}_{t}}(\Omega,\mathcal{H})$: the space of all $\mathcal{F}_{t}$-measurable random variables $\eta:\Omega\rightarrow \mathcal{H}$ with $\mathbb{E}[\|\eta\|_{\mathcal{H}}^p]<\infty$.

\item $\mathcal{L}^p_{\mathcal{F}}(\Omega,C([s,T],\mathcal{H}))$: the Banach space of all $\mathcal{F}$-progressively measurable  processes $X:[s,T]\times \Omega\rightarrow \mathcal{H}$ with
$\mathbb{E}\left[\sup \limits_{t\in[s,T]}\|X(t)\|^p_{\mathcal{H}}\right]<\infty$.

\item  $\mathcal{L}^{\infty}_{\mathcal{F}}(\Omega,\mathcal{L}^p([s,T],\mathcal{H}))$: the Banach space of all $\mathcal{F}$-progressively measurable  processes $X:[s,T]\times \Omega\rightarrow \mathcal{H}$  with
$\mathop{\mbox{esssup}} \limits_{\omega\in \Omega} \left(\int_s^T\|X(t)\|_{\mathcal{H}}^pdt\right)<\infty$.

\item  $\mathcal{L}^{\infty}_{\mathcal{F}}(\Omega,C([s,T],\mathcal{H}))$: the Banach space of all $\mathcal{F}$-progressively measurable  processes $X:[s,T]\times \Omega\rightarrow \mathcal{H}$  with
$\mathop{\mbox{esssup}} \limits_{\omega\in \Omega} \left(\sup \limits_{t\in[s,T]}\|X(t)\|_{\mathcal{H}}\right)<\infty$.

\item $\mathcal{L}^q_{\mathcal{F}}(\Omega,\mathcal{L}^p([s,T],\mathcal{H}))$: the space of all $\mathcal{F}$-progressively measurable processes $X:[s,T]\times \Omega\rightarrow \mathcal{H}$  with $\mathbb{E}\left(\left[\int_s^T\|X(t)\|_{\mathcal{H}}^pdt\right]^{\frac{q}{p}}\right)<\infty$.

\end{itemize}
We denote $\mathcal{L}^p_{\mathcal{F}}(\Omega,\mathcal{L}^p([s,T],\mathcal{H}))=\mathcal{L}^p_{\mathcal{F}}([s,T],\mathcal{H})$. Clearly, $\mathcal{L}^2_{\mathcal{F}_{t}}(\Omega,\mathcal{H})$ (resp. $\mathcal{L}^2_{\mathcal{F}}([s,T],\mathcal{H})$) is a Hilbert space equipped with the norm $\|\cdot\|_{\mathcal{L}^2_{\mathcal{F}_{t}}(\Omega,\mathcal{H})}=\mathbb{E}^{\frac{1}{2}}[\|\cdot\|_{\mathcal{H}}^2]$ (resp. $\|\cdot\|_{\mathcal{L}^2_{\mathcal{F}}([s,T],\mathcal{H})}=\mathbb{E}^{\frac{1}{2}}\left[\int_s^T|\cdot(t)|_{\mathcal{H}}^2dt\right]$) and inner product $\langle\cdot,\cdot\rangle_{\mathcal{L}^2_{\mathcal{F}_{t}(\Omega,\mathcal{H})}}$ (resp. $\langle\cdot,\cdot\rangle_{\mathcal{L}^2_{\mathcal{F}}([s,T],\mathcal{H})}$). Without loss of generality, we denote $\langle\cdot,\cdot\rangle$ (resp. $|\cdot|$) the inner product (resp. the norm) in any Euclidean space.  And for notational convenience, we will frequently suppress the $t$-dependence of a process involved in an equation or an integral.

The admissible control set of $u(\cdot)$ is defined as follows, where both equality and inequality constraints are involved.
\begin{defn}
For given $l,l'\in\mathbb{R}^*$ with $1\leq l'\leq l$, and given $\alpha_i\in\mathcal{L}^2_{\mathcal{F}}([s,T],\mathbb{R}^{n})\subset\mathcal{L}^2_{\mathcal{F}}(\Omega,\mathcal{L}^1([s,T],\mathbb{R}^{n}))$, bounded $\beta_i\in\mathcal{L}^2_{\mathcal{F}}([s,T],\mathbb{R}^{m})$, and $\gamma_i\in {\mathcal{L}^{2}_{\mathcal{F}_{T}}(\Omega,\mathbb{R}^{n})}$, the admissible control set is defined as follows:
\begin{align}
U^{ad}=\Bigg\{&u\in\mathcal{L}^2_{\mathcal{F}}([s,T],\mathbb{R}^m)\big|u\;\mbox{satisfies the following affine constraints:}\nonumber\\
&\langle X,\alpha_i\rangle_{\mathcal{L}^2_{\mathcal{F}}([s,T],\mathbb{R}^n)}+\langle u,\beta_i\rangle_{\mathcal{L}^2_{\mathcal{F}}([s,T],\mathbb{R}^m)}+\langle X(T),\gamma_i\rangle_{\mathcal{L}^2_{\mathcal{F}_{T}}(\Omega,\mathbb{R}^n)}\leq a_i, \;i=1,2,\cdots,l',\nonumber\\
&\langle X,\alpha_i\rangle_{\mathcal{L}^2_{\mathcal{F}}([s,T],\mathbb{R}^n)}+\langle u,\beta_i\rangle_{\mathcal{L}^2_{\mathcal{F}}([s,T],\mathbb{R}^m)}+\langle X(T),\gamma_i\rangle_{\mathcal{L}^2_{\mathcal{F}_{T}}(\Omega,\mathbb{R}^n)}= a_i, \;i=l'+1,l'+2,\cdots,l\Bigg\}\label{3},
\end{align}
where $a=(a_1,a_2,\cdots,a_l)^{\top}\in \mathbb{R}^l$.
\end{defn}
For any $u\in U^{ad}$, the state equation \eqref{SDE1} admits a unique solution $X\in \mathcal{L}^2_{\mathcal{F}}(\Omega,C([s,T],\mathbb{R}^n))$ (see \cite{lu2021mathematical}). Moreover, letting $(\widetilde{R}_i,\widetilde{r}_i)\in\mathcal{L}^2_{\mathcal{F}}(\Omega,C([s,T],\mathbb{R}^n))\times\mathcal{L}^2_{\mathcal{F}}([s,T],\mathbb{R}^n)$ be the unique solution to the following BSDE (\cite[Proposition 3.2]{yong2020stochastic})
\begin{equation*}
\begin{cases}
d\widetilde{R}_i=-\left[A^{\top}\widetilde{R}_i+C^{\top}\widetilde{r}_i+\alpha_i\right]dt+\widetilde{r}_i dB(t), \quad t\in[s,T],\\
\widetilde{R}_i(T)=\gamma_i
\end{cases}
\end{equation*}
and applying the It\^{o} formula to $\langle X,\widetilde{R}_i\rangle$, the admissible control set $U^{ad}$  given by \eqref{3} can be rewritten as
\begin{align}\label{100}
U^{ad}=\Bigg\{u\in\mathcal{L}^2_{\mathcal{F}}([s,T],\mathbb{R}^m)\bigg|&\left\langle u,\widetilde{\rho}_i\right\rangle_{\mathcal{L}^2_{\mathcal{F}}([s,T],\mathbb{R}^m)}\leq \widetilde{a}_i,\; i=1,2,\cdots,l',\nonumber\\
&\left\langle u,\widetilde{\rho}_i\right\rangle_{\mathcal{L}^2_{\mathcal{F}}([s,T],\mathbb{R}^m)}= \widetilde{a}_i,\; i=l'+1,l'+2,\cdots,l\Bigg\}
\end{align}
with $\widetilde{\rho}_i=\beta_i+B^{\top}\widetilde{R}_i+D^{\top}\widetilde{r}_i$ and $\widetilde{a}_i=a_i-\langle \xi, \mathbb{E}[R_i(0)]\rangle$. \eqref{100} indeed reveals the essence of the affine constraints, since it is independent of the state variable $X$. Obviously, $U^{ad}$ is closed and convex in $\mathcal{L}^2_{\mathcal{F}}([s,T],\mathbb{R}^m)$.

We further make the following assumption for \eqref{100}, which seems not very hard to be verified.
\begin{assumption}\label{17}
(the Slater condition) There exists at least one $\overline{u}\in U^{ad}$ such that
\begin{equation}\label{99}
\begin{cases}
\left\langle u,\widetilde{\rho}_i\right\rangle_{\mathcal{L}^2_{\mathcal{F}}([s,T],\mathbb{R}^m)}< \widetilde{a}_i, \;i=1,2,\cdots,l',\\
\left\langle u,\widetilde{\rho}_i\right\rangle_{\mathcal{L}^2_{\mathcal{F}}([s,T],\mathbb{R}^m)}= \widetilde{a}_i, \;i=l'+1,l'+2,\cdots,l,
\end{cases}
\end{equation}
where the controlled pair $(\overline{X},\overline{u})$ is governed by the state equation \eqref{SDE1}.
\end{assumption}
\begin{remark}\label{98}
Without loss of ambiguity, the vector group $\{\widetilde{\rho}_i\}_{i=l'+1}^l$ of equality constraints is required to be linearly independent, i.e., if there exists a vector $k=(k_{l
+1},k_{l'+2},\cdots,k_l)^{\top}\in\mathbb{R}^{l-l'}$ such that $\sum \limits_{i=l'+1}^{l}k_iD_i=0^{n\times 1}$, $\forall t\in[s,T],\;\mathbb{P}$-a.s., then $k=0^{(l-l')\times 1}$. Furthermore, let $\overline{\rho}=(\widetilde{\rho}_{l'+1},\widetilde{\rho}_{l'+2},\cdots,\widetilde{\rho}_l)$ and $\overline{a}=(\widetilde{a}_{l'+1},\widetilde{a}_{l'+2},\cdots,\widetilde{a}_l)^{\top}$, and consider the map $\Xi:\mathcal{L}^2_{\mathcal{F}}([s,T],\mathbb{R}^m)\rightarrow \mathbb{R}^{l-l'},\;\Xi(u)=\mathbb{E}[\overline{\rho}^{\top}u]-\overline{a}$. Then, by choosing $u=\overline{\rho}\theta\;(\forall\theta\in\mathbb{R}^{l-l'})$,  we can see $\mathbb{E}[\overline{\rho}^{\top}\overline{\rho}]$ is invertible and so $\Xi$ is surjective, which coincides with assumption (A4) in \cite{zhang2023stochastic} and can be satisfied under  some Kalman-type rank condition.
\end{remark}
For the controlled pair $(X,u)$ governed by the state equation \eqref{SDE1}, consider the following cost functional
\begin{equation}\label{8}
J(s,\xi,u)=\mathbb{E}\left[\displaystyle{\int_s^T}\left\langle
\begin{bmatrix}
E(t)& F(t)&G(t)\\
F^{\top}(t)& I(t)&K(t)\\
G^{\top}(t)& K^{\top}(t)&0
\end{bmatrix}
\begin{bmatrix}
X(t)\\
u(t)\\
1
\end{bmatrix},
\begin{bmatrix}
X(t)\\
u(t)\\
1
\end{bmatrix}\right\rangle
dt+\left\langle
\begin{bmatrix}
M& N\\
N^{\top}& 0
\end{bmatrix}
\begin{bmatrix}
X(T)\\
1
\end{bmatrix},
\begin{bmatrix}
X(T)\\
1
\end{bmatrix}\right\rangle
\right],
\end{equation}
where $G\in\mathcal{L}^2_{\mathcal{F}}([s,T],\mathbb{R}^{n})\subset\mathcal{L}^2_{\mathcal{F}}(\Omega,\mathcal{L}^1([s,T],\mathbb{R}^{n}))$,  $K\in\mathcal{L}^2_{\mathcal{F}}([s,T],\mathbb{R}^{m})$,
$N\in\mathcal{L}^{2}_{\mathcal{F}_{T}}(\Omega,\mathbb{R}^{n})$ are all bounded. Moreover,
$E:[0,T]\times\Omega\rightarrow\mathcal{S}_{+}^n$,
$F:[0,T]\times\Omega\rightarrow\mathbb{R}^{n\times m}$, $I:[0,T]\times\Omega\rightarrow\mathcal{S}_{++}^m$ and $M:[0,T]\times\Omega\rightarrow\mathcal{S}_{+}^n$ are all bounded and $\mathcal{F}$-progressively measurable processes satisfying the following assumption.
\begin{assumption}\label{convexity}
For any $\varepsilon>0$, there is
\begin{equation}\label{123}
\mathbb{E}\left[\displaystyle{\int_s^T}\left\langle
\begin{bmatrix}
E(t)& F(t)\\
F^{\top}(t)& I(t)
\end{bmatrix}
\begin{bmatrix}
X(t)\\
u(t)
\end{bmatrix},
\begin{bmatrix}
X(t)\\
u(t)
\end{bmatrix}\right\rangle
dt+\left\langle MX(T),X(T)\right\rangle
\right]\geq \varepsilon\|u\|_{\mathcal{L}^2_{\mathcal{F}}([s,T],\mathbb{R}^m)}.
\end{equation}
\end{assumption}
\begin{remark}
\eqref{123} indicates the convexity of $J(s,\xi,u)$ with respect to $u$.
\end{remark}
In the present paper, we would like to study the SLQ-AC problem specified as follows.
\begin{prob}\label{2}
Find $\widehat{u}\in U^{ad}$ such that
$$
J(s,\xi,\widehat{u})=\inf\limits_{u \in U^{ad}}J(s,\xi,u),
$$
where $J(s,\xi,u)$ is given by \eqref{8}.
\end{prob}

\begin{remark}
There are two features for Problem \ref{2}: (i) in the state equation, affine constraints and the cost functional, all the coefficients are allowed to be random; (ii) both the drift term and diffusion term of the state equation depend on the state variable and the control variable. Therefore we are investigating the SLQ-AC problem  in a quite general framework.
\end{remark}
In the sequel, we call $J(s,\xi,\widehat{u})$ the value function of Problem \ref{2}, which is related to $(s,\xi,a)\in [0,T)\times \mathbb{R}^n\times \mathbb{R}^l$.

\subsection{Motivating Examples}
In this section, we give two  examples to show that some closed and convex subsets in $\mathcal{L}^2_{\mathcal{F}}([s,T],\mathbb{R}^m)$ can be approximately described by adding finite many affine constraints on the space $\mathcal{L}^2_{\mathcal{F}}([s,T],\mathbb{R}^m)$. The first example is inspired by \cite{lim1999stochastic}, which is concerned with the quadratic constraints.
\begin{example}\label{24}
For bounded and $\mathcal{F}$-progressively measurable functionals $\widetilde{E}:[0,T]\times\Omega\rightarrow\mathcal{S}_{+}^n$,
$\widetilde{I}:[0,T]\times\Omega\rightarrow\mathcal{S}_{++}^m$, and $\widetilde{M}:[0,T]\times\Omega\rightarrow\mathcal{S}_{+}^n$, consider the quadratic functional $\Theta: U_1\rightarrow \mathbb{R}$ defined by
$$
\Theta(u)=\langle X,\widetilde{E}X\rangle_{\mathcal{L}^2_{\mathcal{F}}([s,T],\mathbb{R}^n)}+\langle u,\widetilde{I}u\rangle_{\mathcal{L}^2_{\mathcal{F}}([s,T],\mathbb{R}^m)}+\langle X(T),\widetilde{M}X(T)\rangle_{\mathcal{L}^2_{\mathcal{F}_{T}}(\Omega,\mathbb{R}^n)},
$$
where
$
U_1=\left\{u\in\mathcal{L}^2_{\mathcal{F}}([s,T],\mathbb{R}^m)\big|\Theta(u)\leq a_0\right\}
$
and the controlled pair $(X,u)$ is determined by the state equation \eqref{SDE1}, and where $a_0$ is a given positive constant. Then one can easily show that $U_1$ is closed and convex in $\mathcal{L}^2_{\mathcal{F}}([s,T],\mathbb{R}^m)$. On the other hand, let
\begin{align*}
U_2=\Bigg\{u\in\mathcal{L}^2_{\mathcal{F}}([s,T],\mathbb{R}^m)\big|&\langle X,\widetilde{E}\widetilde{\alpha}\rangle_{\mathcal{L}^2_{\mathcal{F}}([s,T],\mathbb{R}^n)}+\langle u,\widetilde{I}\widetilde{\beta}\rangle_{\mathcal{L}^2_{\mathcal{F}}([s,T],\mathbb{R}^m)}+\langle X(T),\widetilde{M}\widetilde{\gamma}\rangle_{\mathcal{L}^2_{\mathcal{F}_{T}}(\Omega,\mathbb{R}^n)}\leq a_0,\\
&\mbox{for all}\;(\widetilde{\alpha},\widetilde{\beta},\widetilde{\gamma})\in\mathcal{L}^2_{\mathcal{F}}([s,T],\mathbb{R}^n)
\times\mathcal{L}^2_{\mathcal{F}}([s,T],\mathbb{R}^m)\times\mathcal{L}^2_{\mathcal{F}_{T}}(\Omega,\mathbb{R}^n)\;\mbox{such that}\\
&\langle\widetilde{E}\widetilde{\alpha},\widetilde{\alpha}\rangle_{\mathcal{L}^2_{\mathcal{F}}([s,T],\mathbb{R}^n)}+\langle \widetilde{I}\widetilde{\beta},\widetilde{\beta}\rangle_{\mathcal{L}^2_{\mathcal{F}}([s,T],\mathbb{R}^m)}+\langle\widetilde{M}\widetilde{\gamma},\widetilde{\gamma}\rangle_{\mathcal{L}^2_{\mathcal{F}_{T}}(\Omega,\mathbb{R}^n)}= a_0\Bigg\}.
\end{align*}
Then for any $(\widetilde{\alpha},\widetilde{\beta},\widetilde{\gamma})\in\mathcal{L}^2_{\mathcal{F}}([s,T],\mathbb{R}^n)
\times\mathcal{L}^2_{\mathcal{F}}([s,T],\mathbb{R}^m)\times\mathcal{L}^2_{\mathcal{F}_{T}}(\Omega,\mathbb{R}^n)$ satisfying $$\langle\widetilde{E}\widetilde{\alpha},\widetilde{\alpha}\rangle_{\mathcal{L}^2_{\mathcal{F}}([s,T],\mathbb{R}^n)}+\langle \widetilde{I}\widetilde{\beta},\widetilde{\beta}\rangle_{\mathcal{L}^2_{\mathcal{F}}([s,T],\mathbb{R}^m)}+\langle\widetilde{M}\widetilde{\gamma},\widetilde{\gamma}\rangle_{\mathcal{L}^2_{\mathcal{F}_{T}}(\Omega,\mathbb{R}^n)}= a_0,$$ we have $U_1\subseteq U_2$ because
\begin{align*}
0\leq &\langle X-\widetilde{\alpha},\widetilde{E}(X-\widetilde{\alpha})\rangle_{\mathcal{L}^2_{\mathcal{F}}([s,T],\mathbb{R}^n)}+\langle u-\widetilde{\beta},\widetilde{I}(u-\widetilde{\beta})\rangle_{\mathcal{L}^2_{\mathcal{F}}([s,T],\mathbb{R}^m)}+\langle X(T)-\widetilde{\gamma},\widetilde{M}(X(T)-\widetilde{\gamma})\rangle_{\mathcal{L}^2_{\mathcal{F}_{T}}(\Omega,\mathbb{R}^n)}\\
\leq &\Theta(u)+\left[\langle\widetilde{E}\widetilde{\alpha},\widetilde{\alpha}\rangle_{\mathcal{L}^2_{\mathcal{F}}([s,T],\mathbb{R}^n)}+\langle \widetilde{I}\widetilde{\beta},\widetilde{\beta}\rangle_{\mathcal{L}^2_{\mathcal{F}}([s,T],\mathbb{R}^m)}+\langle\widetilde{M}\widetilde{\gamma},\widetilde{\gamma}\rangle_{\mathcal{L}^2_{\mathcal{F}_{T}}(\Omega,\mathbb{R}^n)}\right]\\
&\mbox{}-2\left[\langle X,\widetilde{E}\widetilde{\alpha}\rangle_{\mathcal{L}^2_{\mathcal{F}}([s,T],\mathbb{R}^n)}+\langle u,\widetilde{I}\widetilde{\beta}\rangle_{\mathcal{L}^2_{\mathcal{F}}([s,T],\mathbb{R}^m)}+\langle X(T),\widetilde{M}\widetilde{\gamma}\rangle_{\mathcal{L}^2_{\mathcal{F}_{T}}(\Omega,\mathbb{R}^n)}\right]\\
\leq &2a_0-2\left[\langle X,\widetilde{E}\widetilde{\alpha}\rangle_{\mathcal{L}^2_{\mathcal{F}}([s,T],\mathbb{R}^n)}+\langle u,\widetilde{I}\widetilde{\beta}\rangle_{\mathcal{L}^2_{\mathcal{F}}([s,T],\mathbb{R}^m)}+\langle X(T),\widetilde{M}\widetilde{\gamma}\rangle_{\mathcal{L}^2_{\mathcal{F}_{T}}(\Omega,\mathbb{R}^n)}\right].
\end{align*}
Moreover, by choosing $\widetilde{\alpha}=\sqrt{\frac{a_0}{\Theta(u)}}X$, $\widetilde{\beta}=\sqrt{\frac{a_0}{\Theta(u)}}u$ and $\widetilde{\gamma}=\sqrt{\frac{a_0}{\Theta(u)}}X(T)$, one has $U_2\subseteq U_1$. Hence we conclude that $U_1=U_2$. Let
\begin{align*}
U^{p}_2=\Bigg\{u\in&\mathcal{L}^2_{\mathcal{F}}([s,T],\mathbb{R}^m)\big|\langle X,\widetilde{E}\widetilde{\alpha}^i\rangle_{\mathcal{L}^2_{\mathcal{F}}([s,T],\mathbb{R}^n)}+\langle u,\widetilde{I}\widetilde{\beta}^i\rangle_{\mathcal{L}^2_{\mathcal{F}}([s,T],\mathbb{R}^m)}+\langle X(T),\widetilde{M}\widetilde{\gamma}^i\rangle_{\mathcal{L}^2_{\mathcal{F}_{T}}(\Omega,\mathbb{R}^n)}\leq a_0,\\
&\mbox{for all}\;(\widetilde{\alpha}^i,\widetilde{\beta}^i,\widetilde{\gamma}^i)\in\mathcal{L}^2_{\mathcal{F}}([s,T],\mathbb{R}^n)
\times\mathcal{L}^2_{\mathcal{F}}([s,T],\mathbb{R}^m)\times\mathcal{L}^2_{\mathcal{F}_{T}}(\Omega,\mathbb{R}^n)\;(i=1,2,\cdots,p)\\
&\mbox{such that}\;\mathbb{P}\left\{(\widetilde{\alpha}^i,\widetilde{\beta}^i,\widetilde{\gamma}^i)=(\widetilde{\alpha}^j,\widetilde{\beta}^j,\widetilde{\gamma}^j),\;i\neq j\right\}=0\;\mbox{and}\\
&\langle\widetilde{E}\widetilde{\alpha}^i,\widetilde{\alpha}^i\rangle_{\mathcal{L}^2_{\mathcal{F}}([s,T],\mathbb{R}^n)}+\langle \widetilde{I}\widetilde{\beta}^i,\widetilde{\beta}^i\rangle_{\mathcal{L}^2_{\mathcal{F}}([s,T],\mathbb{R}^m)}+\langle\widetilde{M}\widetilde{\gamma}^i,\widetilde{\gamma}^i\rangle_{\mathcal{L}^2_{\mathcal{F}_{T}}(\Omega,\mathbb{R}^n)}= a_0\Bigg\}.
\end{align*}
Then $U_1$ can be approximately described by $U^{p}_2$ when $p\in \mathbb{N}^*$ is large enough.
\end{example}
The second example is concerned with the risk constraints (see, e.g., \cite{herdegen2022mean}).
\begin{example}\label{25}
For a constant $a_0>0$, let $$U_3=\left\{u\in\mathcal{L}^2_{\mathcal{F}}([s,T],\mathbb{R}^m)\big|\mbox{Var}(X(T))=\mathbb{E}(X^2(T))-\mathbb{E}^2(X(T))\leq a_0\right\}$$ and
\begin{align*}
U_4=\Bigg\{u\in\mathcal{L}^2_{\mathcal{F}}([s,T],\mathbb{R}^m)\big|&\left\langle \widehat{\gamma},X(T)\right\rangle_{\mathcal{L}^2_{\mathcal{F}_{T}}(\Omega,\mathbb{R}^n)}\leq a_0\\
&\mbox{for all}\;\widehat{\gamma}\in\mathcal{L}^2_{\mathcal{F}_{T}}(\Omega,\mathbb{R}^n)\;\mbox{such that}\; \mathbb{E}(\widehat{\gamma})=0\;\mbox{and}\;\mbox{Var}(\widehat{\gamma})=a_0\Bigg\},
\end{align*}
where $\mbox{Var}(\cdot)$ denotes the variance of a random variable, and the controlled pair $(X,u)$ is governed by the state equation \eqref{SDE1}. By similar arguments in Example \ref{24}, we can show that $U_3$ is convex and closed in $\mathcal{L}^2_{\mathcal{F}}([s,T],\mathbb{R}^m)$ and $U_3=U_4$. Let
\begin{align*}
U^{p}_4=\Bigg\{u\in\mathcal{L}^2_{\mathcal{F}}([s,T],\mathbb{R}^m)\big|&\left\langle \widehat{\gamma}^i,X(T)\right\rangle_{\mathcal{L}^2_{\mathcal{F}_{T}}(\Omega,\mathbb{R}^n)}\leq a_0,\quad\mbox{for all}\;\widehat{\gamma}^i\in\mathcal{L}^2_{\mathcal{F}_{T}}(\Omega,\mathbb{R}^n)\;(i=1,2,\cdots,p)\\
&\mbox{such that} \;\mathbb{P}\left\{\widetilde{\gamma}^i=\widetilde{\gamma}^j,\;i\neq j\right\}=0,\;\mathbb{E}(\widehat{\gamma}^i)=0\;\mbox{and}\;\mbox{Var}(\widehat{\gamma}^i)=a_0\Bigg\}.
\end{align*}
Then $U_3$ can be approximately described by $U^{p}_4$ when $p\in \mathbb{N}^*$ is large enough.
\end{example}

\section{Dual Problem and the KKT condition}
In this section, we give the formulation of the dual problem for Problem \ref{2}, and then derive the KKT condition for solving the dual problem.
In order to solve Problem \ref{2}, we introduce the following relaxed SLQ problem.
\begin{prob}\label{13}
For any given $\lambda=(\lambda_1,\lambda_2,\cdots,\lambda_l)^{\top}$ with $\lambda_i\geq0$ ($i=1,2,\cdots,l'$) and $\lambda_i\in \mathbb{R}$ ($i=l'+1,l'+2,\cdots,l$), find $u^{*}=u^*(\lambda)$ such that
\begin{equation}\label{+10}
\widetilde{J}(s,\xi,\lambda,u^*(\lambda))=\inf\limits_{u \in \mathcal{L}^2_{\mathcal{F}}([s,T],\mathbb{R}^{n})}\widetilde{J}(s,\xi,\lambda,u),
\end{equation}
where
$$
\widetilde{J}(s,\xi,\lambda,u)=J(u)+2\sum \limits_{i=1}^{l}\lambda_i\left[\langle X,\alpha_i\rangle_{\mathcal{L}^2_{\mathcal{F}}([s,T],\mathbb{R}^n)}+\langle u,\beta_i\rangle_{\mathcal{L}^2_{\mathcal{F}}([s,T],\mathbb{R}^m)}+\langle X(T),\gamma_i\rangle_{\mathcal{L}^2_{\mathcal{F}_{T}}(\Omega,\mathbb{R}^n)}\right]-2\langle a,\lambda\rangle.
$$
\end{prob}
\begin{remark}
$\widetilde{J}(s,\xi,\lambda,u^*(\lambda))$ is nothing but the Lagrangian dual functional of Problem \ref{2}.
\end{remark}
For Problem \ref{13},  the Hamiltonian functional can be given as follows:
$$
H=-\frac{1}{2}\left\langle
\begin{bmatrix}
E& F&G\\
F^{\top}& I&K\\
G^{\top}& K^{\top}&0
\end{bmatrix}
\begin{bmatrix}
X\\
u\\
1
\end{bmatrix},
\begin{bmatrix}
X\\
u\\
1
\end{bmatrix}\right\rangle-\sum \limits_{i=1}^{l}\lambda_i\left[\langle X,\alpha_i\rangle+\langle u,\beta_i\rangle\right]+\langle AX+Bu,Y\rangle
+\langle CX+Du,Z\rangle.
$$
Clearly, the derivatives of $H$ are
\begin{equation*}
\begin{cases}
\frac{\partial H}{\partial X}=-\left(EX+Fu+G\right)-\sum \limits_{i=1}^{l}\lambda_i\alpha_i+A^{\top}Y+C^{\top}Z,\\
\frac{\partial H}{\partial u}=-\left(F^{\top}X+Iu+K\right)-\sum \limits_{i=1}^{l}\lambda_i\beta_i+B^{\top}Y+D^{\top}Z
\end{cases}
\end{equation*}
and the adjoint BSDE is
\begin{equation*}
\begin{cases}
dY=\left[EX+Fu+G+\sum \limits_{i=1}^{l}\lambda_i\alpha_i-A^{\top}Y-C^{\top}Z\right]dt+ZdB(t), \quad t\in[s,T],\\
Y(T)=-MX(T)-N-\sum \limits_{i=1}^{l}\lambda_i\gamma_i.
\end{cases}
\end{equation*}
According to the Pontryagin maximum principle (see, e.g.,  \cite[Theorem 3.3 in  Chapter 3]{Yong1999Stochastic}), the optimal control $u^*$ of Problem \ref{13} can be given as follows:
\begin{equation}\label{S1}
Iu^*=-\left(F^{\top}X^*+K\right)-\sum \limits_{i=1}^{l}\lambda_i\beta_i+B^{\top}Y^*+D^{\top}Z^*,
\end{equation}
where $(X^*,u^*,Y^*,Z^*)=(X^*(\lambda),u^*(\lambda),Y^*(\lambda),Z^*(\lambda))$ satisfies the following fully decoupled forward-backward stochastic differential equation (FBSDE):
\begin{equation}\label{+8}
\begin{cases}
dX^*=\left[AX^*+Bu^*\right]dt+\left[CX^*+Du^*\right]dB(t), \quad t\in[s,T],\\
dY^*=\left[EX^*+Fu^*+G+\sum \limits_{i=1}^{l}\lambda_i\alpha_i-A^{\top}Y^*-C^{\top}Z^*\right]dt+Z^*dB(t), \quad t\in[s,T],\\
X^*(s)=\xi\in \mathbb{R}^n,\quad Y^*(T)=-MX^*(T)-N-\sum \limits_{i=1}^{l}\lambda_i\gamma_i.
\end{cases}
\end{equation}
By \cite[Theorem 2.2]{yong2006linear}, the FBSDE \eqref{+8} admits a unique solution $(X^*,u^*,Y^*,Z^*)\in \mathcal{L}^2_{\mathcal{F}}(\Omega,C([s,T],\mathbb{R}^n))\times U^{ad}\times \mathcal{L}^2_{\mathcal{F}}(\Omega,C([s,T],\mathbb{R}^n))\times\mathcal{L}^2_{\mathcal{F}}([s,T],\mathbb{R}^n))$ for any given $\lambda\in\mathbb{R}^l$. It remains to be verified that $u^*=u^*(\lambda)$ given by \eqref{S1} satisfies condition \eqref{+10}. For the optimal pair $(X^*,u^*)$ and any given control pair $(X,u)\in \mathcal{L}^2_{\mathcal{F}}([s,T],\mathbb{R}^{n})\times \mathcal{L}^2_{\mathcal{F}}([s,T],\mathbb{R}^{m})$, letting $\Delta X=X^*-X$ and $\Delta u=u^*-u$, one has $\Delta X(s)=0$ and
\begin{align}\label{+5}
&\quad\,\frac{1}{2}\left[\widetilde{J}(s,\xi,\lambda,u^*(\lambda))-\widetilde{J}(s,\xi,\lambda,u)\right]\nonumber\\
&\leq \left\langle
\begin{bmatrix}
E& F&G\\
F^{\top}& I&K\\
G^{\top}& K^{\top}&0
\end{bmatrix}
\begin{bmatrix}
\Delta X\\
\Delta u\\
0
\end{bmatrix},
\begin{bmatrix}
X^*\\
u^*\\
1
\end{bmatrix}\right\rangle_{\mathcal{L}^2_{\mathcal{F}}([s,T],\mathbb{R}^{n+m+1})}
+\left\langle
\begin{bmatrix}
M& N\\
N^{\top}& 0
\end{bmatrix}
\begin{bmatrix}
\Delta X(T)\\
0
\end{bmatrix},
\begin{bmatrix}
X^*(T)\\
1
\end{bmatrix}\right\rangle_{\mathcal{L}^2_{\mathcal{F}_{T}}(\Omega,\mathbb{R}^{n+1})}\nonumber\\
&\mbox{}+\sum \limits_{i=1}^{l}\lambda_i\left[\langle \Delta X,\alpha_i\rangle_{\mathcal{L}^2_{\mathcal{F}}([s,T],\mathbb{R}^n)}+\langle \Delta u,\beta_i\rangle_{\mathcal{L}^2_{\mathcal{F}}([s,T],\mathbb{R}^m)}+\langle\Delta X(T),\gamma_i\rangle_{\mathcal{L}^2_{\mathcal{F}_{T}}(\Omega,\mathbb{R}^n)}\right].
\end{align}
Noticing that
\begin{equation}\label{+7}
\left\langle
\begin{bmatrix}
M& N\\
N^{\top}& 0
\end{bmatrix}
\begin{bmatrix}
\Delta X(T)\\
0
\end{bmatrix},
\begin{bmatrix}
X^*(T)\\
1
\end{bmatrix}\right\rangle_{\mathcal{L}^2_{\mathcal{F}_{T}}(\Omega,\mathbb{R}^{n+1})}+\langle\Delta X(T),\gamma_i\rangle_{\mathcal{L}^2_{\mathcal{F}_{T}}(\Omega,\mathbb{R}^n)}=-\left\langle \Delta X(T),Y^*(T)\right\rangle_{\mathcal{L}^2_{\mathcal{F}_{T}}(\Omega,\mathbb{R}^n)},
\end{equation}
applying the It\^{o} formula to $\langle \Delta X,Y^*\rangle$ derives
\begin{align}\label{+6}
&\quad\,\langle\Delta X(T),Y^*\rangle_{\mathcal{L}^2_{\mathcal{F}_{T}}(\Omega,\mathbb{R}^n)}\nonumber\\
&=\langle \Delta u,B^{\top}Y^*+D^{\top}Z^*\rangle_{\mathcal{L}^2_{\mathcal{F}}([s,T],\mathbb{R}^m)}+\langle EX^*+Fu^*+G+\sum \limits_{i=1}^{l}\lambda_i\alpha_i,\Delta X\rangle_{\mathcal{L}^2_{\mathcal{F}}([s,T],\mathbb{R}^n)}\nonumber\\
&=\langle \Delta u,Iu^*+F^{\top}X^*+K+\sum \limits_{i=1}^{l}\lambda_i\beta_i\rangle_{\mathcal{L}^2_{\mathcal{F}}([s,T],\mathbb{R}^m)}+\langle EX^*+Fu^*+G+\sum \limits_{i=1}^{l}\lambda_i\alpha_i,\Delta X\rangle_{\mathcal{L}^2_{\mathcal{F}}([s,T],\mathbb{R}^n)}.
\end{align}
By \eqref{+5}, \eqref{+7} and \eqref{+6}, we have $\widetilde{J}(s,\xi,\lambda,u^*(\lambda))-\widetilde{J}(s,\xi,\lambda,u)\leq0$, which implies that $u^*=u^*(\lambda)$ given by \eqref{S1} is an optimal control of Problem \ref{13}.

Next, we search for the state feedback form of $u^*$. Observing the linearity of \eqref{+8} and following \cite{tang2003general}, we assume that
\begin{equation}\label{+16}
Y^*=-PX^*-Q-\sum \limits_{i=1}^{l}\lambda_iR_i.
\end{equation}
 Then one can show that $P$ is described by the following backward stochastic Riccati equation (BSRE):
\begin{equation}\label{4}
\begin{cases}
dP=-\left[E+PA+A^{\top}P+C^{\top}\Lambda+\Lambda C+C^{\top}PC-L^{\top}S^{-1}L\right]dt+\Lambda dB(t), \quad t\in[s,T],\\
P(T)=M,
\end{cases}
\end{equation}
and $Q$, $R_i$ satisfy the following system of BSDEs:
\begin{equation}\label{15}
\begin{cases}
dQ=-\left[A^{\top}Q+C^{\top}\pi+G-L^{\top}S^{-1}\psi\right]dt+\pi dB(t), \quad t\in[s,T],\\
dR_i=-\left[A^{\top}R_i+C^{\top}r_i+\alpha_i-L^{\top}S^{-1}\rho_i\right]dt+r_i dB(t), \quad t\in[s,T],\\
Q(T)=N,\quad R_i(T)=\gamma_i.
\end{cases}
\end{equation}
Here  $S=I+D^{\top}PD$, $L=F^{\top}+B^{\top}P+D^{\top}\Lambda+D^{\top}PC$, $\psi=K+B^{\top}Q+D^{\top}\pi$ and $\rho_i=\beta_i+B^{\top}R_i+D^{\top}r_i$.
The existence and uniqueness of solutions to \eqref{4} and \eqref{15} can be ensured by the following proposition.
\begin{prop}\label{27}
Under Assumption \ref{convexity}, the following two assertions
hold:
\begin{enumerate}[(I)]
\item There exists a unique solution $(P,\Lambda)\in\mathcal{L}^{\infty}_{\mathcal{F}}(\Omega,C([s,T],\mathcal{S}^n))\times\mathcal{L}^2_{\mathcal{F}}([s,T],\mathcal{S}^n)$ to BSRE \eqref{4} such that for some $\varepsilon>0$, the inequality
\begin{equation}\label{16}
S=I+D^{\top}PD\geq \varepsilon \mathcal{I}_m,\qquad \mbox{a.e.} \;t\in[s,T], \;\mathbb{P}-\mbox{a.s},
\end{equation}
holds, i.e., $\left\langle(I+D^{\top}PD)v,v\right\rangle_{\mathcal{L}^2_{\mathcal{F}}([s,T],\mathbb{R}^m)}\geq\varepsilon\|v\|^2_{\mathcal{L}^2_{\mathcal{F}}([s,T],\mathbb{R}^m)}$ for all $v\in\mathcal{L}^2_{\mathcal{F}}([s,T],\mathbb{R}^m)$.

\item There exists a unique solution $$(Q,\pi,R_i,r_i)\in\mathcal{L}^2_{\mathcal{F}}(\Omega,C([s,T],\mathbb{R}^n))\times\mathcal{L}^2_{\mathcal{F}}([s,T],\mathbb{R}^n)
    \times\mathcal{L}^2_{\mathcal{F}}(\Omega,C([s,T],\mathbb{R}^n))\times\mathcal{L}^2_{\mathcal{F}}([s,T],\mathbb{R}^n)$$ to the system of BSDEs  \eqref{15}.
\end{enumerate}
\end{prop}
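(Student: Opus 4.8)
The plan is to treat the two assertions separately but with the same underlying idea: interpret each equation as the optimality system of a coercive quadratic functional, so that existence and uniqueness come from convex analysis rather than from a direct fixed-point argument on the (quadratically nonlinear) Riccati equation. For assertion (I), I would first observe that Assumption \ref{convexity} says precisely that the map $u\mapsto J_0(u):=\mathbb{E}\big[\int_s^T\langle \begin{bmatrix} E & F\\ F^\top & I\end{bmatrix}\begin{bmatrix} X\\ u\end{bmatrix},\begin{bmatrix} X\\ u\end{bmatrix}\rangle\,dt+\langle MX(T),X(T)\rangle\big]$ (with $X$ the solution of \eqref{SDE1} driven by $u$ with $\xi=0$) is uniformly convex on $\mathcal{L}^2_{\mathcal{F}}([s,T],\mathbb{R}^m)$. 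This is exactly the standard coercivity/uniform-convexity hypothesis under which backward stochastic Riccati equations of this type are known to be well posed; I would invoke the existence-uniqueness theory for BSREs with random coefficients under such a condition (e.g.\ the results of Tang, or the treatment in \cite{yong2006linear}/\cite{Yong1999Stochastic}) to get $(P,\Lambda)\in\mathcal{L}^\infty_{\mathcal{F}}(\Omega,C([s,T],\mathcal{S}^n))\times\mathcal{L}^2_{\mathcal{F}}([s,T],\mathcal{S}^n)$ together with the uniform lower bound $S=I+D^\top P D\ge \varepsilon\mathcal{I}_m$. The bound \eqref{16} is the key qualitative output: it is what makes $S^{-1}$ (hence $L^\top S^{-1}L$, $L^\top S^{-1}\psi$, $L^\top S^{-1}\rho_i$) well defined and bounded, and it follows because $P$ is the value-function coefficient of the coercive problem, so $\langle S v,v\rangle$ inherits the coercivity constant $\varepsilon$ from \eqref{123}.

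For assertion (II), once $P$, $\Lambda$, and hence $S$, $L$ are known and $S^{-1}$ is bounded, the equations for $Q,\pi$ and for each $R_i,r_i$ in \eqref{15} are \emph{linear} BSDEs: the generators are affine in the unknowns with coefficients built from $A,C,L,S^{-1}$ (bounded, by \eqref{16} and the boundedness assumptions on the data) and with free terms $G-L^\top S^{-1}\psi$, resp.\ $\alpha_i-L^\top S^{-1}\rho_i$. The only subtlety is that $\psi=K+B^\top Q+D^\top\pi$ and $\rho_i=\beta_i+B^\top R_i+D^\top r_i$ couple the free term back into the unknowns, but this coupling is again linear, so after substituting one sees that \eqref{15} is an honest linear BSDE in $(Q,\pi)$ (respectively $(R_i,r_i)$) with bounded coefficients and square-integrable data $G\in\mathcal{L}^2_{\mathcal{F}}$, $N\in\mathcal{L}^2_{\mathcal{F}_T}$ (respectively $\alpha_i,\gamma_i$). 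Standard existence-uniqueness for linear BSDEs (e.g.\ \cite[Proposition 3.2]{yong2020stochastic}, already cited in the excerpt for the equation defining $\widetilde R_i$) then yields the unique solution in the stated space $\mathcal{L}^2_{\mathcal{F}}(\Omega,C([s,T],\mathbb{R}^n))\times\mathcal{L}^2_{\mathcal{F}}([s,T],\mathbb{R}^n)$.

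To close the loop I would also verify, via It\^o's formula applied to $\langle\Delta X,Y^*\rangle$ with $Y^*$ as in \eqref{+16}, that the ansatz \eqref{+16} together with \eqref{4} and \eqref{15} is consistent with the FBSDE \eqref{+8} — i.e.\ that matching the $dt$-terms really does produce \eqref{4} and \eqref{15} and nothing more; this is the routine "completing the square" computation that legitimizes the decomposition. The main obstacle is assertion (I): unlike (II), the Riccati equation \eqref{4} is genuinely nonlinear (the $L^\top S^{-1}L$ term), its coefficients are merely bounded and progressively measurable rather than deterministic, and one must simultaneously produce the solution \emph{and} the a priori lower bound \eqref{16} — there is no a priori reason $S$ stays invertible without exploiting Assumption \ref{convexity}. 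My plan there is not to reprove the BSRE theory from scratch but to cite it, taking care to check that Assumption \ref{convexity} is exactly the hypothesis these theorems require, and then to read off \eqref{16} as the manifestation of coercivity at the level of the value function's Hessian.
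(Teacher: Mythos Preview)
Your approach is essentially the paper's: cite existing BSRE theory for (I) and linear BSDE theory for (II). The paper invokes \cite[Theorem 6.3]{sun2021indefinite} (alternatively \cite[Theorems 5.1 and 5.2]{zhang2020backward}) for part (I) and \cite[Proposition 3.2]{yong2020stochastic} for part (II); your citations of Tang and \cite{Yong1999Stochastic} point in the same direction, though the latter treats primarily deterministic coefficients.

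There is, however, one technical slip in your argument for (II). You assert that the linear BSDEs in \eqref{15} have ``coefficients built from $A,C,L,S^{-1}$ (bounded, \ldots)''. But $L=F^\top+B^\top P+D^\top\Lambda+D^\top PC$ contains $\Lambda\in\mathcal{L}^2_{\mathcal{F}}([s,T],\mathcal{S}^n)$, which is in general \emph{not} bounded --- the paper stresses this immediately after \eqref{S2} and it is the very reason Assumption \ref{bound} is introduced later. Hence after substituting $\psi=K+B^\top Q+D^\top\pi$ and $\rho_i=\beta_i+B^\top R_i+D^\top r_i$, the resulting generators have coefficients (of $Q,\pi$ and of $R_i,r_i$) that are only square-integrable in time, not bounded, and the vanilla Lipschitz-BSDE theorem does not apply directly. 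The paper's proof is aware of this and supplies the correct integrability check: since $K$ and $\beta_i$ are bounded, one has $L^\top S^{-1}K,\ L^\top S^{-1}\beta_i\in\mathcal{L}^2_{\mathcal{F}}(\Omega,\mathcal{L}^1([s,T],\mathbb{R}^n))$, and this is the hypothesis that \cite[Proposition 3.2]{yong2020stochastic} actually requires. Your last paragraph on consistency via It\^{o}'s formula is correct but extraneous to the proposition.
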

\begin{proof}
(I) follows from \cite[Theorem 6.3]{sun2021indefinite} (or \cite[Theorems 5.1 and 5.2]{zhang2020backward}) directly. Then, noticing that $\beta_i,K\in\mathcal{L}^2_{\mathcal{F}}([s,T],\mathbb{R}^{m})$ are bounded, we know $L^{\top}S^{-1}\beta_i,L^{\top}S^{-1}K\in \mathcal{L}^2_{\mathcal{F}}(\Omega,\mathcal{L}^1([s,T],\mathbb{R}^{n}))$. And so (II) follows from \cite[Proposition 3.2]{yong2020stochastic} immediately.
\end{proof}
Combining \eqref{S1}-\eqref{15}, we have the following expression for $u^*$:
\begin{equation}\label{S2}
u^*=-S^{-1}\left(LX^*+\psi+\sum \limits_{i=1}^{l}\lambda_i\rho_i\right).
\end{equation}
We would like to mention that  $\Lambda\in \mathcal{L}^2_{\mathcal{F}}([s,T],\mathcal{S}^n)$ is in general unbounded and so $S^{-1}L$ might be unbounded, which leads to the fact that $u^*\notin \mathcal{L}^2_{\mathcal{F}}([s,T],\mathbb{R}^m)$. Therefore, \eqref{+9} only provides the open-loop solvability of Problem \ref{13}. It remains unclear if the  the framework of closed-loop solvability (in deterministic coefficient setting) is available to the SLQ problems with random coefficients \cite{sun2021indefinite}. Following \cite{zhang2023stochastic}, we make the following additional assumption for ensuring $u^*\in \mathcal{L}^2_{\mathcal{F}}([s,T],\mathbb{R}^m)$.
\begin{assumption}\label{bound}
$S^{-1}L\in\mathcal{L}^{\infty}_{\mathcal{F}}(\Omega,\mathcal{L}^2([s,T],\mathbb{R}^{m\times n}))$.
\end{assumption}
\begin{remark}
In the deterministic case when the coefficients $A,B,C,D,E,F,M$ are all deterministic and continuous functions, one has $P\in C([s,T],\mathcal{S}^n)$ and $\Lambda=0$ (\cite[Theorems 7.1 and 7.2]{Yong1999Stochastic}), and so Assumption \ref{bound} is naturally satisfied.
\end{remark}
We are now able to give the explicit formulation of the Lagranian dual functional  $\widetilde{J}(s,\xi,\lambda,u^*)$. To this end, we apply the It\^{o} formula to $\left\langle X^*,Y^*-Q-\sum \limits_{i=1}^{l}\lambda_iR_i\right\rangle$ and obtain that
\begin{align*}
&\quad\;\left\langle X^*(T),Y^*(T)-Q(T)-\sum \limits_{i=1}^{l}\lambda_iR_i(T)\right\rangle_{\mathcal{L}^2_{\mathcal{F}_{T}}(\Omega,\mathbb{R}^n)}-\left\langle \xi,Y^*(s)-Q(s)-\sum \limits_{i=1}^{l}\lambda_iR_i(s)\right\rangle_{\mathcal{L}^2_{\mathcal{F}_{s}}(\Omega,\mathbb{R}^n)}\\
&=\left\langle X^*,EX^*+Fu^*+\sum \limits_{i=1}^{l}\lambda_i\alpha_i+G+A^{\top}Q+C^{\top}\pi+G-A^{\top}Y^*-C^{\top}Z^*-L^{\top}S^{-1}\psi\right\rangle_{\mathcal{L}^2_{\mathcal{F}}([s,T],\mathbb{R}^n)}\\
&\quad\mbox{}+\left\langle X^*,\sum \limits_{i=1}^{l}\lambda_i\left(A^{\top}R_i+C^{\top}r_i+\alpha_i-L^{\top}S^{-1}\rho_i\right)\right\rangle_{\mathcal{L}^2_{\mathcal{F}}([s,T],\mathbb{R}^n)}\\
&\quad\mbox{}+\left\langle AX^*+Bu^*,Y^*-Q-\sum \limits_{i=1}^{l}\lambda_iR_i\right\rangle_{\mathcal{L}^2_{\mathcal{F}}([s,T],\mathbb{R}^n)}+
\left\langle CX^*+Du^*,Z^*-\pi-\sum \limits_{i=1}^{l}\lambda_ir_i\right\rangle_{\mathcal{L}^2_{\mathcal{F}}([s,T],\mathbb{R}^n)}\\
&=\left\langle X^*,EX^*+2G-L^{\top}S^{-1}\left(\psi+\sum \limits_{i=1}^{l}\lambda_i\rho_i\right)+
2\sum \limits_{i=1}^{l}\lambda_i\alpha_i\right\rangle_{\mathcal{L}^2_{\mathcal{F}}([s,T],\mathbb{R}^n)}+\left\langle X^*,Fu^*\right\rangle_{\mathcal{L}^2_{\mathcal{F}}([s,T],\mathbb{R}^n)}\\
&\quad\mbox{}+\left\langle u^*,B^{\top}Y^*+D^{\top}Z^*-B^{\top}Q-D^{\top}\pi-\sum \limits_{i=1}^{l}\lambda_i\left(B^{\top}R_i+D^{\top}r_i\right)\right\rangle_{\mathcal{L}^2_{\mathcal{F}}([s,T],\mathbb{R}^m)}\\
&=\left\langle X^*,EX^*+2Fu^*+2G\right\rangle_{\mathcal{L}^2_{\mathcal{F}}([s,T],\mathbb{R}^n)}+\left\langle u^*,Iu^*+2K\right\rangle_{\mathcal{L}^2_{\mathcal{F}}([s,T],\mathbb{R}^m)}\\
&\quad\mbox{}-\left\langle u^*+S^{-1}LX^*,\psi+\sum \limits_{i=1}^{l}\lambda_i\rho_i\right\rangle_{\mathcal{L}^2_{\mathcal{F}}([s,T],\mathbb{R}^m)}+2\sum \limits_{i=1}^{l}\lambda_i\left[\left\langle X^*,\alpha_i\right\rangle_{\mathcal{L}^2_{\mathcal{F}}([s,T],\mathbb{R}^n)}+\left\langle u^*,\beta_i\right\rangle_{\mathcal{L}^2_{\mathcal{F}}([s,T],\mathbb{R}^m)}\right],
\end{align*}
where we have used \eqref{S1}. Combining  \eqref{S2}, one has
\begin{align*}
\widetilde{J}(s,\xi,\lambda,u^*)
=&\left\langle \xi,P(s)\xi+2Q(s)+2\sum \limits_{i=1}^{l}\lambda_iR_i(s)\right\rangle_{\mathcal{L}^2_{\mathcal{F}_{s}}(\Omega,\mathbb{R}^n)}\\
&\mbox{}-\left\langle S^{-1}\left[\psi+\sum \limits_{i=1}^{l}\lambda_i\rho_i\right],\psi+\sum \limits_{i=1}^{l}\lambda_i\rho_i\right\rangle_{\mathcal{L}^2_{\mathcal{F}}([s,T],\mathbb{R}^m)}-2\langle\lambda,a\rangle\\
=&\left\langle \xi,P(s)\xi+2Q(s)\right\rangle_{\mathcal{L}^2_{\mathcal{F}_{s}}(\Omega,\mathbb{R}^n)}-\left\langle S^{-1}\psi,\psi\right\rangle_{\mathcal{L}^2_{\mathcal{F}}([s,T],\mathbb{R}^m)}+2\left\langle \mathbb{E}[R^{\top}(s)]\xi-a,\lambda\right\rangle\\
&\mbox{}-2\left\langle \mathbb{E}\left[\int_s^T\rho^{\top}(t)S^{-1}(t)\psi(t)dt\right],\lambda\right\rangle-\left\langle \mathbb{E}\left[\int_s^T\rho^{\top} (t) S^{-1}(t)\rho(t) dt\right]\lambda,\lambda\right\rangle,
\end{align*}
which is quadratic with respect to $\lambda\in \mathbb{R}^l$. Here $\rho=(\rho_1,\rho_2,\cdots,\rho_l)$ and $R=(R_1,R_2,\cdots,R_l)$. We summarize above arguments as the following theorem.
\begin{thm}\label{23}
Under Assumptions \ref{convexity} and \ref{bound}, the optimal control of Problem \ref{13} is
\begin{equation}\label{+9}
u^*=-S^{-1}\left(LX^*+\psi+\sum \limits_{i=1}^{l}\lambda_i\rho_i\right).
\end{equation}
Furthermore, the Lagrangian dual functional of Problem \ref{2} reads
\begin{align}
\widetilde{J}(s,\xi,\lambda,u^*)=&\left\langle \xi,P(s)\xi+2Q(s)\right\rangle_{\mathcal{L}^2_{\mathcal{F}_{s}}(\Omega,\mathbb{R}^n)}-\left\langle S^{-1}\psi,\psi\right\rangle_{\mathcal{L}^2_{\mathcal{F}}([s,T],\mathbb{R}^m)}+2\left\langle \mathbb{E}[R^{\top}(s)]\xi-a,\lambda\right\rangle\nonumber\\
&\mbox{}-2\left\langle \mathbb{E}\left[\int_s^T\rho^{\top}(t)S^{-1}(t)\psi(t)dt\right],\lambda\right\rangle-\left\langle \mathbb{E}\left[\int_s^T\rho^{\top} (t) S^{-1}(t)\rho(t) dt\right]\lambda,\lambda\right\rangle\label{12}.
\end{align}
\end{thm}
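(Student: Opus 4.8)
The plan is to verify Theorem~\ref{23} by assembling the pieces that have already been laid out in the preceding discussion, with the main work being to make the two "one can show that" assertions rigorous: first, that the ansatz \eqref{+16} together with the BSRE \eqref{4} and the system \eqref{15} actually produces a solution of the FBSDE \eqref{+8}, and second, that the It\^{o} computation leading to \eqref{12} closes correctly. Since Proposition~\ref{27} gives existence and uniqueness of $(P,\Lambda)$ and $(Q,\pi,R_i,r_i)$, and since $S=I+D^\top P D\geq\varepsilon\mathcal{I}_m$ is invertible with $S^{-1}$ bounded, under Assumption~\ref{bound} the process $u^*$ defined by \eqref{+9} lies in $\mathcal{L}^2_{\mathcal{F}}([s,T],\mathbb{R}^m)$; this is what upgrades the open-loop candidate to an admissible control.

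First I would substitute $Y^*=-PX^*-Q-\sum_i\lambda_i R_i$ into the second equation of \eqref{+8}. Applying the It\^{o} product formula to each of $PX^*$, $Q$ and $\sum_i\lambda_i R_i$ using \eqref{4} and \eqref{15}, collecting the $dt$-terms and the $dB(t)$-terms, and reading off $Z^*=-\Lambda X^*-\pi-\sum_i\lambda_i r_i-PD u^*-\dots$ (the martingale part), one matches the drift against $EX^*+Fu^*+G+\sum_i\lambda_i\alpha_i-A^\top Y^*-C^\top Z^*$. The terms involving $L^\top S^{-1}(\cdot)$ in \eqref{4} and \eqref{15} are exactly designed so that, after inserting the feedback $Iu^*=-(F^\top X^*+K)-\sum_i\lambda_i\beta_i+B^\top Y^*+D^\top Z^*$ from \eqref{S1} and solving for $u^*$, one gets $Su^*=-(LX^*+\psi+\sum_i\lambda_i\rho_i)$, i.e.\ \eqref{S2}=\eqref{+9}. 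The terminal condition $Y^*(T)=-MX^*(T)-N-\sum_i\lambda_i\gamma_i$ matches $P(T)=M$, $Q(T)=N$, $R_i(T)=\gamma_i$. By the uniqueness in \cite[Theorem 2.2]{yong2006linear} this solution coincides with the one already produced, so \eqref{+9} is the optimal control of Problem~\ref{13} (optimality itself having been established in the excerpt via the inequality \eqref{+5}--\eqref{+6}).

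Next, for the dual functional, I would start from the identity obtained by applying It\^{o} to $\langle X^*,Y^*-Q-\sum_i\lambda_i R_i\rangle=\langle X^*,-PX^*-2Q-2\sum_i\lambda_iR_i\rangle$ between $s$ and $T$, which the excerpt has already carried out and simplified to
\[
\langle X^*,EX^*+2Fu^*+2G\rangle+\langle u^*,Iu^*+2K\rangle-\langle u^*+S^{-1}LX^*,\psi+\textstyle\sum_i\lambda_i\rho_i\rangle+2\sum_i\lambda_i[\langle X^*,\alpha_i\rangle+\langle u^*,\beta_i\rangle].
\]
On the other side of the identity the boundary terms give $\langle X^*(T),-MX^*(T)-2N-2\sum_i\lambda_i\gamma_i\rangle-\langle\xi,-P(s)\xi-2Q(s)-2\sum_i\lambda_i R_i(s)\rangle$. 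Recognizing that $\widetilde J(s,\xi,\lambda,u^*)=J(u^*)+2\sum_i\lambda_i[\langle X^*,\alpha_i\rangle+\langle u^*,\beta_i\rangle+\langle X^*(T),\gamma_i\rangle]-2\langle a,\lambda\rangle$ and that $J(u^*)$ contains precisely $\langle X^*,EX^*+2Fu^*+2G\rangle+\langle u^*,Iu^*+2K\rangle+\langle X^*(T),MX^*(T)+2N\rangle$, I would add and subtract and cancel, then use $u^*+S^{-1}LX^*=-S^{-1}(\psi+\sum_i\lambda_i\rho_i)$ to replace $-\langle u^*+S^{-1}LX^*,\psi+\sum_i\lambda_i\rho_i\rangle$ by $\langle S^{-1}(\psi+\sum_i\lambda_i\rho_i),\psi+\sum_i\lambda_i\rho_i\rangle$. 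Expanding this quadratic form in $\lambda$ and writing $\rho=(\rho_1,\dots,\rho_l)$, $R=(R_1,\dots,R_l)$, together with $R_i(s)$ being deterministic-averaged against $\xi$, yields exactly \eqref{12}.

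The main obstacle I anticipate is bookkeeping rather than conceptual: keeping track of all the cross terms in the two It\^{o} expansions, ensuring the $L^\top S^{-1}$ correction terms in \eqref{4} and \eqref{15} cancel against the contributions of $D u^*$ and $D^\top$-terms in $Z^*$, and justifying that every stochastic integral appearing is a true martingale (so its expectation vanishes). This last point follows from the integrability classes in Proposition~\ref{27} and Assumption~\ref{bound} together with boundedness of $A,B,C,D$ and of $S^{-1}$, via standard Cauchy--Schwarz and Burkholder--Davis--Gundy estimates; I would state this once and suppress the routine details.
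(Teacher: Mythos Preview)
Your proposal is correct and follows essentially the same route as the paper: the paper's proof is precisely the material preceding the theorem statement (the theorem is introduced with ``We summarize above arguments as the following theorem''), and you are reproducing that argument while filling in the two steps the paper leaves as ``one can show that'' --- namely, verifying that the ansatz \eqref{+16} together with \eqref{4}--\eqref{15} solves \eqref{+8}, and carrying out the It\^{o} computation for $\langle X^*,Y^*-Q-\sum_i\lambda_iR_i\rangle$. Your added remark about checking that the stochastic integrals are true martingales is a useful point of rigor that the paper suppresses.
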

\begin{remark}
We would like to mention that  $\Lambda\in \mathcal{L}^2_{\mathcal{F}}([s,T],\mathcal{S}^n)$ is in general unbounded and so $S^{-1}L$ might be unbounded. Therefore, \eqref{+9} only provides the open-loop solvability of Problem \ref{13}. It remains unclear if the  the framework of closed-loop solvability (in deterministic coefficient setting) is available to the SLQ problems with random coefficients \cite{sun2021indefinite}.
\end{remark}
In the following, we would like to transform the constraints of $u$ into the ones of $\lambda$. Applying the It\^{o} formula to $\left\langle X^*,R_i\right\rangle$ derives
\begin{align}
\delta_i(\lambda)&=\langle X^*,\alpha_i\rangle_{\mathcal{L}^2_{\mathcal{F}}([s,T],\mathbb{R}^n)}+\langle u^*,\beta_i\rangle_{\mathcal{L}^2_{\mathcal{F}}([s,T],\mathbb{R}^m)}+\langle X^*(T),\gamma_i\rangle_{\mathcal{L}^2_{\mathcal{F}_{T}}(\Omega,\mathbb{R}^n)}\nonumber\\
&=\left\langle \xi,R_i(s)\right\rangle_{\mathcal{L}^2_{\mathcal{F}_{s}}(\Omega,\mathbb{R}^n)}-\left\langle S^{-1}\psi,\rho_i\right\rangle_{\mathcal{L}^2_{\mathcal{F}}([s,T],\mathbb{R}^m)}-\left\langle \rho^{\top}S^{-1}\rho_i,\lambda\right\rangle_{\mathcal{L}^2_{\mathcal{F}}([s,T],\mathbb{R}^l)}\label{19}
\end{align}
and so the dual problem can be specified as follows.
\begin{prob}\label{40}
Find $\lambda^*\in U_0$  such that
$$
\widetilde{J}(s,\xi,\lambda^*,u^{*}(\lambda^*))=\sup\limits_{\lambda\in U_0}\widetilde{J}(s,\xi,\lambda,u^*(\lambda)),
$$
where the Lagrangian dual functional $\widetilde{J}(s,\xi,\lambda,u^*(\lambda))$ is given by \eqref{12} and
$$
U_0=\left\{\lambda\in \mathbb{R}^l\big|\lambda_i\geq0\;(i=1,2,\cdots,l'),\; \delta_i(\lambda)\leq a_i \; (i=1,2,\cdots,l'),\; \delta_i(\lambda)=a_i\; (i=l'+1,l'+2,\cdots,l)\right\}.
$$
\end{prob}
\begin{remark}
Under Assumption \ref{17}, one can easily check that $U_0$ is nonempty, closed and convex in $\mathbb{R}^l$.
\end{remark}
We have the following strong duality result for Problem \ref{40}.
\begin{thm}\label{duality}
Under Assumptions \ref{17} and \ref{convexity}, there is
\begin{equation}\label{95}
\widetilde{J}(s,\xi,\lambda^*,u^{*}(\lambda^*))=J(s,\xi,\widehat{u}).
\end{equation}
\end{thm}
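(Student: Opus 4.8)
The plan is to establish \eqref{95} via the classical convex Lagrangian duality argument: first prove weak duality, then produce a Karush--Kuhn--Tucker multiplier $\lambda^{*}$ for the primal problem and verify that it is feasible and optimal for the dual Problem \ref{40}. Abbreviate $g_i(u):=\langle X,\alpha_i\rangle+\langle u,\beta_i\rangle+\langle X(T),\gamma_i\rangle-a_i$, so that $\widetilde{J}(s,\xi,\lambda,u)=J(s,\xi,u)+2\sum_{i=1}^{l}\lambda_i g_i(u)$. For weak duality, fix any $u\in U^{ad}$ and any $\lambda\in U_0$; since $\lambda_i\ge 0$ and $g_i(u)\le 0$ for $i\le l'$ while $g_i(u)=0$ for $i>l'$, we get $\sum_i\lambda_i g_i(u)\le 0$, hence $\widetilde{J}(s,\xi,\lambda,u^{*}(\lambda))\le\widetilde{J}(s,\xi,\lambda,u)\le J(s,\xi,u)$, the first inequality because $u^{*}(\lambda)$ minimizes $\widetilde{J}(s,\xi,\lambda,\cdot)$ over $\mathcal{L}^2_{\mathcal{F}}([s,T],\mathbb{R}^m)$ by Theorem \ref{23}. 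Taking $\inf$ over $u\in U^{ad}$ and then $\sup$ over $\lambda\in U_0$ yields $\sup_{\lambda\in U_0}\widetilde{J}(s,\xi,\lambda,u^{*}(\lambda))\le J(s,\xi,\widehat{u})$.

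For the reverse inequality I would pass to the state-free description \eqref{100}: let $\widehat{u}$ be the solution of Problem \ref{2} (which exists by Assumption \ref{convexity}, $J(s,\xi,\cdot)$ being then convex, continuous and coercive on the nonempty closed convex set $U^{ad}\subset\mathcal{H}:=\mathcal{L}^2_{\mathcal{F}}([s,T],\mathbb{R}^m)$), and view $U^{ad}$ as cut out by the genuinely affine constraints $\langle u,\widetilde{\rho}_i\rangle\le\widetilde{a}_i$ $(i\le l')$ and $\langle u,\widetilde{\rho}_i\rangle=\widetilde{a}_i$ $(i>l')$. Assumption \ref{17} is exactly the Slater condition for the inequality constraints and Remark \ref{98} supplies the linear independence (surjectivity) qualification for the equality constraints; hence a standard Karush--Kuhn--Tucker theorem for convex programs in Hilbert space yields a multiplier $\lambda^{*}$ with $\lambda_i^{*}\ge 0$ for $i\le l'$ such that (i) $\widehat{u}$ minimizes $\widetilde{J}(s,\xi,\lambda^{*},\cdot)$ over all of $\mathcal{H}$, and (ii) the complementary slackness $\lambda_i^{*}g_i(\widehat{u})=0$ holds for $i\le l'$. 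Since any unconstrained minimizer of $\widetilde{J}(s,\xi,\lambda^{*},\cdot)$ solves the FBSDE \eqref{+8}, which is uniquely solvable (Theorem \ref{23}), this minimizer is unique and equals $u^{*}(\lambda^{*})$; thus $\widehat{u}=u^{*}(\lambda^{*})$.

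To conclude, since $\widehat{u}\in U^{ad}$ and $\widehat{u}=u^{*}(\lambda^{*})$, the identity \eqref{19} gives $\delta_i(\lambda^{*})=g_i(\widehat{u})+a_i\le a_i$ for $i\le l'$ and $\delta_i(\lambda^{*})=a_i$ for $i>l'$; with $\lambda_i^{*}\ge 0$ $(i\le l')$, this shows $\lambda^{*}\in U_0$. Moreover
$$\widetilde{J}(s,\xi,\lambda^{*},u^{*}(\lambda^{*}))=\widetilde{J}(s,\xi,\lambda^{*},\widehat{u})=J(s,\xi,\widehat{u})+2\sum_{i=1}^{l}\lambda_i^{*}g_i(\widehat{u})=J(s,\xi,\widehat{u}),$$
the last equality using complementary slackness for $i\le l'$ and $g_i(\widehat{u})=0$ for $i>l'$. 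Together with the weak duality bound, this shows $\lambda^{*}$ attains the supremum in Problem \ref{40} and that \eqref{95} holds.

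The step I expect to be the main obstacle is the middle paragraph: justifying existence of the KKT multiplier $\lambda^{*}$ in the infinite-dimensional Hilbert space $\mathcal{H}$. This requires a careful check of the constraint qualification (Slater for the finitely many inequality constraints via Assumption \ref{17}, surjectivity of the equality-constraint map via Remark \ref{98}) together with the observation that, for a convex functional, Lagrangian stationarity at $\widehat{u}$ forces $\widehat{u}$ to be a global unconstrained minimizer, which is precisely what permits identifying it with the FBSDE solution $u^{*}(\lambda^{*})$. Reducing the constraints to the form \eqref{100}, where every constraint is affine in $u$ alone, is what makes the abstract convex-duality machinery apply directly.
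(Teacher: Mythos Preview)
Your proof is correct and rests on the same convex-duality principle as the paper's, but the two arguments are organised differently. The paper carries out the separation step by hand: it builds two disjoint convex sets $U_5,\,U_6\subset\mathbb{R}\times\mathbb{R}^l$ (the value/constraint image set and the negative half-axis), separates them by a hyperplane $(\overline{\kappa},\overline{\tau})$, and then uses the Slater point from Assumption~\ref{17} together with the linear-independence condition of Remark~\ref{98} to rule out $\overline{\kappa}=0$; normalising produces a multiplier $\overline{\lambda}$ with $\widetilde{J}(s,\xi,\overline{\lambda},u^{*}(\overline{\lambda}))\ge J(s,\xi,\widehat{u})$, and weak duality closes the loop. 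You instead package that separation step into an appeal to an abstract KKT theorem, and invest the remaining effort in the identification $\widehat{u}=u^{*}(\lambda^{*})$ via the unique solvability of the FBSDE~\eqref{+8}, plus an explicit check that $\lambda^{*}\in U_0$ and that complementary slackness gives the equality in~\eqref{95}. Your route yields a cleaner primal--dual identification (essentially the content of Theorems~\ref{41} and~\ref{42} already built in); the paper's route is more self-contained, since it avoids citing an external infinite-dimensional KKT theorem and instead derives the multiplier directly from finite-dimensional separation.
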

\begin{proof}
Consider the following two sets
\begin{align*}
U_5=\{&(\kappa,\tau)\in\mathbb{R}\times \mathbb{R}^l|\exists u\in\mathcal{L}^2_{\mathcal{F}}([s,T],\mathbb{R}^m),\;s.t.\;\left\langle u,\widetilde{\rho}_i\right\rangle_{\mathcal{L}^2_{\mathcal{F}}([s,T],\mathbb{R}^m)}-\widetilde{a}_i\leq\tau_i\; (i=1,2,\cdots,l'),\\
&\left\langle u,\widetilde{\rho}_i\right\rangle_{\mathcal{L}^2_{\mathcal{F}}([s,T],\mathbb{R}^m)}-\widetilde{a}_i=\tau_i (i=l'+1,l'+2,\cdots,l),\;\mbox{and}\;J(s,\xi,{u})-J(s,\xi,\widehat{u})\leq\kappa\},\\
U_6=\{&(\kappa,0^{l\times 1})\in\mathbb{R}\times \mathbb{R}^l,\,\kappa<0\}
\end{align*}
with $U_5\cap U_6=\emptyset$. According to Assumption \ref{convexity},  both $U_5$ and $U_6$ are convex. By the separation theorem of convex sets, there exists $(\overline{\kappa},\overline{\tau})\in\mathbb{R}\times \mathbb{R}^l$ such that $\inf\limits_{(\kappa,\tau)\in U_5}[\overline{\kappa}\kappa+\langle \overline{\tau},\tau\rangle]\geq \sup\limits_{\kappa<0}\overline{\kappa}\kappa$.
One can easily check that $\overline{\kappa}\geq0$, $\tau_i\geq0\; (i=1,2,\cdots,l')$ and $\tau_i\neq0\; (i=l'+1,l'+2,\cdots,l)$, and so
\begin{equation}\label{96}
\inf\limits_{(\kappa,\tau)\in U_5}[\overline{\kappa}\kappa+\langle \overline{\tau},\tau\rangle]\geq0.
\end{equation}
We claim that $\overline{\kappa}>0$. If $\overline{\kappa}=0$, then according to Assumptions \ref{17}, there exists a $\overline{u}\in\mathcal{L}^2_{\mathcal{F}}([s,T],\mathbb{R}^m)$ satisfying \eqref{99}. Therefore one has $\overline{\tau}_i=0\; (i=1,2,\cdots,l')$, where $\overline{\tau}=(\overline{\tau}_1,\overline{\tau}_2,\cdots,\overline{\tau}_l)$. And so
\begin{equation}\label{97}
\inf_{u\in\mathcal{L}^2_{\mathcal{F}}([s,T],\mathbb{R}^m)}\left[\left\langle u,\sum\limits_{i=l'+1}^{l}\overline{\tau}_i\widetilde{\rho}_i\right\rangle_{\mathcal{L}^2_{\mathcal{F}}([s,T],\mathbb{R}^m)}
-\sum\limits_{i=l'+1}^{l}\overline{\tau}_i\widetilde{a}_i\right]
\geq\inf\limits_{(\kappa,\tau)\in U_5}\sum\limits_{i=l'+1}^{l}\overline{\tau}_i\tau_i\geq0.
\end{equation}
By Remark \ref{98}, we know $\widetilde{\rho}=\sum\limits_{i=l'+1}^{l}\overline{\tau}_i\widetilde{\rho}_i\neq0$ and $\left\langle \overline{u},\widetilde{\rho}\right\rangle_{\mathcal{L}^2_{\mathcal{F}}([s,T],\mathbb{R}^m)}
=\sum\limits_{i=l'+1}^{l}\overline{\tau}_i\widetilde{a}_i$. Choosing $u_0=\overline{u}-\varepsilon\widetilde{\rho}$ for $\varepsilon>0$ small enough, then
$\left\langle u_0,\widetilde{\rho}\right\rangle_{\mathcal{L}^2_{\mathcal{F}}([s,T],\mathbb{R}^m)}
-\sum\limits_{i=l'+1}^{l}\overline{\tau}_i\widetilde{a}_i=-\varepsilon\|\widetilde{\rho}\|_{\mathcal{L}^2_{\mathcal{F}}([s,T],\mathbb{R}^m)}<0$, which contradicts \eqref{97}.

Letting $\overline{\lambda}=\frac{1}{\kappa}\overline{\tau}=(\overline{\lambda}_1,\overline{\lambda}_2,\cdots,\overline{\lambda}_l)$ ($\overline{\kappa}>0$) and combing \eqref{96}, we obtain
\begin{align*}
0&\leq\inf\limits_{(\kappa,\tau)\in U_5}[\kappa+\langle \overline{\lambda},\tau\rangle]\\
&\leq\inf_{u\in\mathcal{L}^2_{\mathcal{F}}([s,T],\mathbb{R}^m)}\left[J(s,\xi,{u})-J(s,\xi,\widehat{u})
+\sum\limits_{i=1}^{l}\overline{\lambda}_i\left[\left\langle u,\widetilde{\rho}_i\right\rangle_{\mathcal{L}^2_{\mathcal{F}}([s,T],\mathbb{R}^m)}
-\widetilde{a}_i\right]\right]\\
&=\widetilde{J}(s,\xi,\overline{\lambda},u^*(\overline{\lambda}))-J(s,\xi,\widehat{u})
\end{align*}
and so
$$
J(s,\xi,\widehat{u})\leq\widetilde{J}(s,\xi,\overline{\lambda},u^*(\overline{\lambda}))\leq\sup\limits_{\lambda\in U_0}\widetilde{J}(s,\xi,\lambda,u^*(\lambda)).
$$
On the other hand, since $\widetilde{J}(s,\xi,\lambda,u^*(\lambda))\leq J(s,\xi,\widehat{u})$ for all $\lambda\in U_0$, one has
$$
\widetilde{J}(s,\xi,\lambda^*,u^{*}(\lambda^*))=\sup\limits_{\lambda\in U_0}\widetilde{J}(s,\xi,\lambda,u^*(\lambda))\leq J(s,\xi,\widehat{u}),
$$
which ends the proof.
\end{proof}
According to Theorem \ref{duality}, we know that $u^{*}(\lambda^*)\in U^{ad}$ is an optimal control of Problem \ref{2}. Furthermore, if the invertibility assumption
\begin{equation}\label{26}
\mathbb{E}\left[\int_s^T\rho^{\top} (t) S^{-1}(t)\rho(t) dt\right]\in \mathcal{S}_{++}^{l}
\end{equation}
holds, then  the function $\widetilde{J}(s,\xi,\lambda,u^*(\lambda))$ is uniformly concave with respect to $\lambda$, which indicates the uniqueness of $\lambda^*=(\lambda^*_1,\lambda^*_2,\cdots,\lambda_l^*)^{\top}$. Therefore, we have the following theorem.
\begin{thm}\label{41}
Under Assumption \ref{17}, the solution $u^*(\lambda^*)$ to Problem \ref{40} is as well the solution to Problem \ref{2}. Moreover, if the invertibility assumption \eqref{26} holds, then
Problem \ref{40} is uniquely solvable.
\end{thm}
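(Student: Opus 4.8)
The plan is to obtain the first assertion from the strong--duality identity \eqref{95} combined with the strict convexity of the relaxed functional, and the second from the explicit concave quadratic shape of the dual functional in \eqref{12}.

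For the first assertion, I would begin by recalling that a dual optimum $\lambda^*\in U_0$ actually exists --- the separation argument inside the proof of Theorem \ref{duality} produces one. Next I would observe that, by the very definition of $\delta_i(\lambda)$ in \eqref{19}, the condition $\lambda^*\in U_0$ says precisely that $\lambda^*_i\ge 0$ for $i=1,\dots,l'$ and that the controlled pair $(X^*(\lambda^*),u^*(\lambda^*))$ satisfies the affine constraints in \eqref{3}; in particular $u^*(\lambda^*)\in U^{ad}$. Since for every $u\in U^{ad}$ the $i$-th affine functional is $\le a_i$ for $i\le l'$ and $=a_i$ for $i>l'$, while $\lambda^*_i\ge 0$ for $i\le l'$, the Lagrangian penalty is nonpositive, so $\widetilde J(s,\xi,\lambda^*,u)\le J(s,\xi,u)$ for all $u\in U^{ad}$. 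Applying this at an optimal control $\widehat u$ of Problem \ref{2} (which exists since $U^{ad}$ is closed convex and $J(s,\xi,\cdot)$ is convex and coercive under Assumption \ref{convexity}) and using that $u^*(\lambda^*)$ minimises $\widetilde J(s,\xi,\lambda^*,\cdot)$ over all of $\mathcal L^2_{\mathcal F}([s,T],\mathbb R^m)$ gives
\[
J(s,\xi,\widehat u)\overset{\eqref{95}}{=}\widetilde J(s,\xi,\lambda^*,u^*(\lambda^*))\le\widetilde J(s,\xi,\lambda^*,\widehat u)\le J(s,\xi,\widehat u),
\]
so all three quantities coincide and $\widehat u$ is itself a minimiser of $\widetilde J(s,\xi,\lambda^*,\cdot)$. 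Because $u\mapsto\widetilde J(s,\xi,\lambda^*,u)$ differs from $J(s,\xi,\cdot)$ only by terms that are affine in $u$ (the state $X$ depends affinely on $u$), Assumption \ref{convexity} makes this map uniformly convex, hence its minimiser is unique, so $\widehat u=u^*(\lambda^*)$; this is the first claim.

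For the \emph{moreover} part I would read off from \eqref{12} that $\lambda\mapsto\widetilde J(s,\xi,\lambda,u^*(\lambda))$ is a quadratic function on $\mathbb R^l$ whose quadratic part is $-\bigl\langle\mathbb E\bigl[\int_s^T\rho^{\top}S^{-1}\rho\,dt\bigr]\lambda,\lambda\bigr\rangle$. By \eqref{16} the matrix $S^{-1}$ is symmetric and positive definite a.e., so $\mathbb E[\int_s^T\rho^{\top}S^{-1}\rho\,dt]$ is automatically positive semidefinite; the invertibility assumption \eqref{26} upgrades this to strict positive definiteness, whence the dual functional is uniformly concave and $\widetilde J(s,\xi,\lambda,u^*(\lambda))\to-\infty$ as $|\lambda|\to\infty$. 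As $U_0$ is nonempty (Assumption \ref{17}), closed and convex in $\mathbb R^l$, a uniformly concave coercive function attains its supremum on $U_0$ at exactly one point, so Problem \ref{40} is uniquely solvable.

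The only genuinely delicate point is that complementary slackness cannot be read off directly at the primal point $u^*(\lambda^*)$; the clean way around this is the squeeze displayed above together with the uniqueness of the Lagrangian minimiser, which is exactly what the uniform convexity built into Assumption \ref{convexity} delivers. The remaining steps are bookkeeping with the identities \eqref{95}, \eqref{12} and \eqref{19}.
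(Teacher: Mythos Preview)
Your argument is correct and matches the paper's approach: the paper likewise deduces the first assertion directly from the strong--duality identity \eqref{95} (together with the fact that $\lambda^*\in U_0$ forces $u^*(\lambda^*)\in U^{ad}$), and obtains the second from the uniform concavity of the quadratic dual functional \eqref{12} under \eqref{26}. Your squeeze display and the appeal to uniform convexity of $\widetilde J(s,\xi,\lambda^*,\cdot)$ make explicit the identification $\widehat u=u^*(\lambda^*)$ that the paper leaves implicit, but the underlying route is the same.
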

\begin{remark}
From Proposition \ref{27}, one has $S^{-1}(t)\in \mathcal{S}_{++}^{l}$ a.e. $t\in[s,T]$, $\mathbb{P}$-a.s.. Hence invertibility assumption \eqref{26} can be satisfied under some rank condition for $\rho$, which will be discussed in the next section.
\end{remark}
Finally, by standard arguments (see \cite{bonnans2013perturbation, jane2005necessary, guo2013mathematical}  and references therein), we are able to give the KKT condition for the optimal control of Problem \ref{2}, which shows the relationship between $\widehat{u}$ and $\lambda^*$.
\begin{thm}\label{42}
Under Assumptions \ref{17}, \ref{convexity} and \ref{bound}, $\widehat{u}$ is an optimal control of Problem \ref{2} if and only if the following KKT condition holds:
\begin{equation}\label{18}
\begin{cases}
\widehat{u}=-S^{-1}\left(L\widehat{X}+\psi+\rho^{\top}\lambda^*\right),\quad \lambda^*\geq 0\;(i=1,2,\cdots,l'),\\
\lambda^*_i\left(\delta_i(\lambda^*)- a_i\right)=0\;(i=1,2,\cdots,l'),
\quad
\delta_i(\lambda^*)= a_i\;(i=l'+1,l'+2,\cdots,l).
\end{cases}
\end{equation}
\end{thm}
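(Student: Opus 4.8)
The plan is to establish the two directions of the equivalence by combining the strong duality result of Theorem \ref{duality} with the feedback representation of the optimal control of the relaxed problem derived in Theorem \ref{23}. First I would prove the ``if'' direction. Suppose $\lambda^*\in U_0$ satisfies the complementary slackness and equality conditions in \eqref{18}, and let $\widehat{u}$ be defined by the feedback formula $\widehat{u}=-S^{-1}(L\widehat{X}+\psi+\rho^{\top}\lambda^*)$. By Theorem \ref{23} (with $\lambda=\lambda^*$ and noting $\rho^{\top}\lambda^* = \sum_{i=1}^l \lambda^*_i\rho_i$), this $\widehat u$ is precisely $u^*(\lambda^*)$, the optimal control of Problem \ref{13}. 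The constraints $\delta_i(\lambda^*)\le a_i$ for $i\le l'$ and $\delta_i(\lambda^*)=a_i$ for $i>l'$ together with $\lambda^*_i\ge 0$ say exactly that $\lambda^*\in U_0$; moreover by \eqref{19} the quantities $\delta_i(\lambda^*)$ are the constraint functionals $\langle X^*,\alpha_i\rangle+\langle u^*,\beta_i\rangle+\langle X^*(T),\gamma_i\rangle$ evaluated at $(X^*(\lambda^*),u^*(\lambda^*))$, so $\widehat u=u^*(\lambda^*)\in U^{ad}$. It then remains to show $\widehat u$ is optimal for Problem \ref{2}. For this, one uses the standard Lagrangian inequality: for any $u\in U^{ad}$,
\begin{equation*}
J(s,\xi,u)\ge J(s,\xi,u)+2\sum_{i=1}^l\lambda^*_i\big[\langle X,\alpha_i\rangle+\langle u,\beta_i\rangle+\langle X(T),\gamma_i\rangle-a_i\big]=\widetilde J(s,\xi,\lambda^*,u)\ge \widetilde J(s,\xi,\lambda^*,u^*(\lambda^*)),
\end{equation*}
where the first inequality holds because $\lambda^*_i\ge0$ and the bracketed terms are $\le0$ for $i\le l'$ while $=0$ for $i>l'$, and the last inequality is the defining minimality of $u^*(\lambda^*)$. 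Finally, complementary slackness $\lambda^*_i(\delta_i(\lambda^*)-a_i)=0$ for $i\le l'$ and $\delta_i(\lambda^*)=a_i$ for $i>l'$ force the correction term to vanish at $u=\widehat u$, so $\widetilde J(s,\xi,\lambda^*,u^*(\lambda^*))=J(s,\xi,\widehat u)$; hence $J(s,\xi,u)\ge J(s,\xi,\widehat u)$ for all $u\in U^{ad}$, proving optimality.

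For the ``only if'' direction, suppose $\widehat u$ is an optimal control of Problem \ref{2}. By Theorem \ref{duality} there exists $\lambda^*\in U_0$ (the dual optimum) with $\widetilde J(s,\xi,\lambda^*,u^*(\lambda^*))=J(s,\xi,\widehat u)$, and since $\lambda^*\in U_0$ we automatically have $\lambda^*_i\ge0$ for $i\le l'$ and $\delta_i(\lambda^*)=a_i$ for $i>l'$. Now run the chain of inequalities in the previous paragraph with this optimal $\widehat u$ in the role of $u$: since the two ends coincide, every inequality is an equality. Equality in the last step $J(s,\xi,\widehat u)=\widetilde J(s,\xi,\lambda^*,\widehat u)\ge\widetilde J(s,\xi,\lambda^*,u^*(\lambda^*))$ combined with Assumption \ref{convexity} (strict convexity of $\widetilde J(s,\xi,\lambda^*,\cdot)$ in the control, as noted in the Remark after it) forces $\widehat u=u^*(\lambda^*)$, which is the feedback formula $\widehat u=-S^{-1}(L\widehat X+\psi+\rho^{\top}\lambda^*)$ by Theorem \ref{23}. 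Equality in the first step, $J(s,\xi,\widehat u)=\widetilde J(s,\xi,\lambda^*,\widehat u)=J(s,\xi,\widehat u)+2\sum_{i=1}^l\lambda^*_i[\delta_i(\lambda^*)-a_i]$, forces $\sum_{i\le l'}\lambda^*_i[\delta_i(\lambda^*)-a_i]=0$; since each summand is $\le0$, every summand is zero, i.e.\ $\lambda^*_i(\delta_i(\lambda^*)-a_i)=0$ for $i=1,\dots,l'$. This is exactly \eqref{18}.

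The main obstacle I anticipate is the careful bookkeeping in identifying the feedback control $\widehat u$ produced by the KKT stationarity condition with the optimizer $u^*(\lambda^*)$ of the relaxed problem, and in verifying that equality throughout the Lagrangian chain does pin down $\widehat u$ uniquely — this is where Assumption \ref{convexity} (via the strict convexity of $\widetilde J(s,\xi,\lambda^*,\cdot)$) is essential, and one should also note Assumption \ref{bound} is what guarantees $u^*(\lambda^*)\in\mathcal L^2_{\mathcal F}([s,T],\mathbb R^m)$ so that the feedback expression is a genuine admissible control. Everything else is a routine rearrangement of the Lagrangian identities \eqref{19} and \eqref{+9}, so I would keep the write-up compact and refer to Theorems \ref{23} and \ref{duality} rather than re-deriving them.
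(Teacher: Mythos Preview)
Your proposal is correct and is precisely the standard Lagrangian-duality argument that the paper alludes to (the paper gives no proof of Theorem \ref{42}, only the sentence ``by standard arguments (see \cite{bonnans2013perturbation, jane2005necessary, guo2013mathematical} and references therein)''). One small point of bookkeeping: the KKT system \eqref{18} as written does not list primal feasibility $\delta_i(\lambda^*)\le a_i$ for $i\le l'$ explicitly, so in your ``if'' direction you should make clear that this is supplied by the hypothesis $\widehat u\in U^{ad}$ (implicit in ``$\widehat u$ is an optimal control of Problem \ref{2}'') rather than by \eqref{18} itself; otherwise your argument is complete.
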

\begin{remark}
If $\rho=0^{m\times l}$, then the term $\rho^{\top}\lambda^*$ can be removed and \eqref{18} reduces to $\widehat{u}=-S^{-1}\left(L\widehat{X}+\psi\right)$, i.e., $\widehat{u}$ is indeed the optimal control of the SLQ problem with no constraints. In other words, such an optimal control naturally satisfies all the affine constraints.
\end{remark}
\begin{remark}
If invertibility assumption \eqref{26} holds, then one can find a matrix $\rho_S\in \mathcal{S}_{++}^{l}$ with $\rho^2_S=\mathbb{E}\left[\int_s^T\rho^{\top} (t) S^{-1}(t)\rho(t) dt\right]$. Considering the bijective mapping $\overline{\rho}_S: U_0\rightarrow\overline{\rho}_S(U_0)\subset \mathbb{R}^l$ defined by $\overline{\rho}_S(\lambda)= {\rho}_S\lambda$, one can easily check that $\overline{\rho}_S(U_0)$ is nonempty, closed and convex, and so the optimal control of Problem \ref{2} can be equivalently given as follows:
\begin{equation*}
\begin{cases}
\widehat{u}=-S^{-1}\left(L\widehat{X}+\psi+\rho^{\top}\lambda^*\right),\\
\lambda^*=\rho^{-1}_S\mathop{\mbox{Proj}}_{{\overline{\rho}_S(U_0)}}\left\{\rho^{-1}_S\left(\mathbb{E}[R^{\top}(s)]\xi-a-\mathbb{E}\left[\int_s^T\rho^{\top}(t)S^{-1}(t)\psi(t)dt\right]\right)\right\},
\end{cases}
\end{equation*}
where $\mathop{\mbox{Proj}}_{{\overline{\rho}_S(U_0)}}[\cdot]$ represents the projection on the set ${\overline{\rho}_S(U_0)}$.
\end{remark}

\section{Sufficient Condition for Invertibility}
In this section, we provide a sufficient condition for ensuring the invertibility assumption \eqref{26}.

Recall the BSDE in the system \eqref{15} as follows:
\begin{equation}\label{28}
\begin{cases}
dR_i=-\left[A^{\top}R_i+C^{\top}r_i+\alpha_i-L^{\top}S^{-1}\rho_i\right]dt+r_idB(t), \quad t\in[s,T],\\
R_i(T)=\gamma_i.
\end{cases}
\end{equation}
We would like to find the expression of $\rho_i=\beta_i+B^{\top}R_i+D^{\top}r_i$ by $\alpha_i$, $\beta_i$ and $\gamma_i$ instead, since the BSDE \eqref{28} is hard to be solved directly in general. Consider the following system of SDEs:
\begin{equation}\label{35}
\begin{cases}
dV=V\left[A^{\top}-L^{\top}S^{-1}B^{\top}\right]dt+V\left[C^{\top}-L^{\top}S^{-1}D^{\top}\right]dB(t),\quad t\in[s,T],\\
dV_0=\left[(C^{\top}-L^{\top}S^{-1}D^{\top})^2-\left(A^{\top}-L^{\top}S^{-1}B^{\top})\right]\right]V_0dt-C^{\top}V_0dB(t),\quad t\in[s,T],\\
V(s)=V_0(s)=\mathcal{I}_n,
\end{cases}
\end{equation}
which admits a unique solution $(V,V_0)\in\mathcal{L}^2_{\mathcal{F}}(\Omega,C([s,T],\mathbb{R}^{n\times n}))\times\mathcal{L}^2_{\mathcal{F}}(\Omega,C([s,T],\mathbb{R}^{n\times n}))$. Applying the It\^{o} formula to $VV_0$, one has $V(t)V_0(t)=\mathcal{I}_n$ a.e. $t\in[s,T]$, $\mathbb{P}$-a.s.. Thus $V$ is invertible with $V^{-1}=V_0$.
Then, applying the It\^{o} formula to $VR_i$ derives
\begin{equation*}
\begin{cases}
d(VR_i)=-V\left[\alpha_i-L^{\top}S^{-1}\beta_i\right]dt
+V\left[(C^{\top}-L^{\top}S^{-1}D^{\top})R_i+r_i\right]dB(t),\\
V(T)R_i(T)=V(T)\gamma_i,
\end{cases}
\end{equation*}
and so $V(t)R_i(t)=\mathbb{E}\left[V(T)\gamma_i+\int_t^TV(\tau)\left[\alpha_i(\tau)-L^{\top}(\tau)S^{-1}(\tau)\beta_i(\tau)\right] d\tau\Big|\mathcal{F}_t\right]$. By means of Malliavin Calculus, we have the following theorems.
\begin{thm}\label{21}
If  $$\mathfrak{L}_t(\alpha_i,\beta_i,\gamma_i)=V(T)\gamma_i+\int_t^T V(\tau)\left[\alpha_i(\tau)-L^{\top}(\tau)S^{-1}(\tau)\beta_i(\tau)\right]d\tau$$
admits the Malliavin derivative $\mathfrak{D}_t\widetilde{\mathfrak{L}}_t(\alpha_i,\beta_i,\gamma_i)$ with $\mathbb{E}\left[\int_s^T|\mathfrak{D}_t\widetilde{\mathfrak{L}}_t(\alpha_i,\beta_i,\gamma_i)|^2dt\right]<\infty$, where $V$ is given by \eqref{35}. Then
$$
R_i=V_0\mathbb{E}\left[\mathfrak{L}_{\cdot}(\alpha_i,\beta_i,\gamma_i)\Big|\mathcal{F}_{\cdot}\right]
$$
and
\begin{equation}\label{111}
\rho_i=\beta_i+D^{\top}V_0\mathbb{E}\left[\mathfrak{D}_{\cdot}\mathfrak{L}_{\cdot}(\alpha_i,\beta_i,\gamma_i)\Big|\mathcal{F}_{\cdot}\right]
+(B^{\top}-D^{\top}C^{\top}+D^{\top}L^{\top}S^{-1}D^{\top})V_0\mathbb{E}\left[\mathfrak{L}_{\cdot}(\alpha_i,\beta_i,\gamma_i)\Big|\mathcal{F}_{\cdot}\right].
\end{equation}
\end{thm}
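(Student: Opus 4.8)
The plan is to verify the three stated formulas by differentiating the processes $V$, $V_0$, $VR_i$ via It\^o's formula and then extracting the expressions through Clark--Ocone--type representation. First I would confirm that the system \eqref{35} is linear with bounded coefficients (using Proposition \ref{27}, which gives $S^{-1}L\in\mathcal{L}^{\infty}_{\mathcal{F}}(\Omega,\mathcal{L}^2([s,T],\mathbb{R}^{m\times n}))$ under Assumption \ref{bound}, together with boundedness of $A,B,C,D$), so that it admits a unique solution $(V,V_0)$ in the stated space; this is standard SDE theory. Applying It\^o's formula to the product $V(t)V_0(t)$ and using the two SDEs in \eqref{35}, all the drift and diffusion terms must cancel, leaving $d(VV_0)=0$, hence $V(t)V_0(t)=\mathcal{I}_n$; this identifies $V^{-1}=V_0$. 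The only subtlety here is bookkeeping: one has to carefully collect the It\^o correction term $\langle dV,dV_0\rangle$, which produces the $(C^{\top}-L^{\top}S^{-1}D^{\top})^2$ and $(C^{\top}-L^{\top}S^{-1}D^{\top})(-C^{\top})$ pieces that match the drift chosen for $V_0$.

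Next I would apply It\^o's formula to $V(t)R_i(t)$, where $(R_i,r_i)$ solves the BSDE \eqref{28}. Expanding $d(VR_i)=(dV)R_i+V(dR_i)+\langle dV,dR_i\rangle$ and substituting the dynamics of $V$ and of $R_i$, the terms involving $A^{\top}R_i$, $C^{\top}r_i$ (from the BSDE drift) should cancel against the corresponding terms generated by $dV$ and the cross-variation $\langle dV,dR_i\rangle$, leaving the drift $-V[\alpha_i-L^{\top}S^{-1}\beta_i]$ after recognizing $\rho_i=\beta_i+B^{\top}R_i+D^{\top}r_i$ and grouping the $-L^{\top}S^{-1}\rho_i$ contribution. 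The diffusion coefficient collapses to $V[(C^{\top}-L^{\top}S^{-1}D^{\top})R_i+r_i]$. Since this is the announced equation, $VR_i$ plus $\int_s^{\cdot}V[\alpha_i-L^{\top}S^{-1}\beta_i]\,d\tau$ is a martingale, so taking conditional expectations gives $V(t)R_i(t)=\mathbb{E}[\mathfrak{L}_t(\alpha_i,\beta_i,\gamma_i)\mid\mathcal{F}_t]$, and left-multiplying by $V_0=V^{-1}$ yields $R_i=V_0\,\mathbb{E}[\mathfrak{L}_{\cdot}(\alpha_i,\beta_i,\gamma_i)\mid\mathcal{F}_{\cdot}]$.

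For the formula \eqref{111} for $\rho_i$ I would use the Clark--Ocone formula: under the stated Malliavin-differentiability hypothesis on $\mathfrak{L}_t$, the martingale part of $V R_i$ has integrand $\mathbb{E}[\mathfrak{D}_t\mathfrak{L}_t(\alpha_i,\beta_i,\gamma_i)\mid\mathcal{F}_t]$ (after accounting for the fact that $t$ appears both as the lower limit of the integral and as the Malliavin-derivative variable, which is where the notation $\widetilde{\mathfrak{L}}$ versus $\mathfrak{L}$ matters — the derivative of the lower-limit dependence contributes nothing in the quadratic-variation sense). Comparing this with the diffusion coefficient $V[(C^{\top}-L^{\top}S^{-1}D^{\top})R_i+r_i]$ obtained above lets me solve for $r_i$: $r_i=V_0\,\mathbb{E}[\mathfrak{D}_{\cdot}\mathfrak{L}_{\cdot}\mid\mathcal{F}_{\cdot}]-(C^{\top}-L^{\top}S^{-1}D^{\top})R_i$. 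Substituting $R_i$ and this $r_i$ into $\rho_i=\beta_i+B^{\top}R_i+D^{\top}r_i$ and collecting terms produces exactly \eqref{111}, with the coefficient $(B^{\top}-D^{\top}C^{\top}+D^{\top}L^{\top}S^{-1}D^{\top})$ in front of $V_0\mathbb{E}[\mathfrak{L}_{\cdot}\mid\mathcal{F}_{\cdot}]$. The main obstacle I anticipate is the careful justification of the Clark--Ocone step: one must ensure the process $VR_i$ (not just $\mathfrak{L}_t$ for fixed $t$) is Malliavin differentiable and apply the chain rule to the product $V R_i$ and the conditional expectation, handling the moving lower limit $\int_t^T$ properly so that the spurious term $-V(t)[\alpha_i(t)-L^{\top}(t)S^{-1}(t)\beta_i(t)]$ coming from $\partial_t\int_t^T$ does not pollute the diffusion coefficient; this is precisely why the hypothesis is phrased in terms of $\mathfrak{D}_t\widetilde{\mathfrak{L}}_t$ with the integrability bound $\mathbb{E}[\int_s^T|\mathfrak{D}_t\widetilde{\mathfrak{L}}_t|^2dt]<\infty$.
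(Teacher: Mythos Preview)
Your proposal is correct and follows essentially the same route as the paper: apply It\^o's formula to $VV_0$ to verify $V^{-1}=V_0$, apply It\^o's formula to $VR_i$ to obtain the stated drift and diffusion, take conditional expectations for the $R_i$ formula, and then invoke Clark--Ocone to identify the martingale integrand with $\mathbb{E}[\mathfrak{D}_{\cdot}\mathfrak{L}_{\cdot}\mid\mathcal{F}_{\cdot}]$, solve for $r_i$, and substitute into $\rho_i=\beta_i+B^{\top}R_i+D^{\top}r_i$. Your discussion of the moving lower limit and the $\mathfrak{L}$ versus $\widetilde{\mathfrak{L}}$ distinction is more careful than the paper's own treatment, which simply cites the predictable-modification remark and Clark--Ocone from \cite{nunno2009malliavin} without further comment.
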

\begin{proof}
It is easy to derive the first equality. According to \cite[Remark 4.3]{nunno2009malliavin}, $\mathbb{E}\left[\mathfrak{D}_{\cdot}\mathfrak{L}_{\cdot}(\alpha_i,\beta_i,\gamma_i)\Big|\mathcal{F}_{\cdot}\right]$ admits a predictable modification. Therefore one can choose $\mathbb{E}\left[\mathfrak{D}_{\cdot}\widetilde{\mathfrak{L}}_{\cdot}(\alpha_i,\beta_i,\gamma_i)\Big|\mathcal{F}_{\cdot}\right]\in\mathcal{L}^2_{\mathcal{F}}([s,T],\mathbb{R}^n)$ . By Clark-Ocone's formula (see, e.g., \cite[Theorem 4.1]{nunno2009malliavin}), we see that
\begin{align*}
\mathfrak{L}_t(\alpha_i,\beta_i,\gamma_i)-\mathbb{E}\left[\mathfrak{L}_t(\alpha_i,\beta_i,\gamma_i)\Big|\mathcal{F}_t\right]
&=\int_t^TV(\tau)\left[(C^{\top}(\tau)-L^{\top}(\tau)S^{-1}(\tau)D^{\top}(\tau))R_i(\tau)+r_i(\tau)\right]dB_{\tau}\\
&=\int_t^T\mathbb{E}\left[\mathfrak{D}_{\tau}\mathfrak{L}_{\tau}(\alpha_i,\beta_i,\gamma_i)\Big|\mathcal{F}_{\tau}\right]dB_{\tau}.
\end{align*}
And so for $\tau\in[s,T]$, one has
$$
\mathbb{E}\left[\mathfrak{D}_{\tau}\mathfrak{L}_{\tau}(\alpha_i,\beta_i,\gamma_i)\Big|\mathcal{F}_{\tau}\right]
=V(\tau)\left[(C^{\top}(\tau)-L^{\top}(\tau)S^{-1}(\tau)D^{\top}(\tau))R_i(\tau)+r_i(\tau)\right].
$$
Therefore,
$$
r_i=V_0\mathbb{E}\left[\mathfrak{D}_{\cdot}\widetilde{\mathfrak{L}}_{\cdot}(\alpha_i,\beta_i,\gamma_i)\Big|\mathcal{F}_{\cdot}\right]-
(C^{\top}-L^{\top}S^{-1}D^{\top})V_0\mathbb{E}\left[\mathfrak{L}_{\cdot}(\alpha_i,\beta_i,\gamma_i)\Big|\mathcal{F}_{\cdot}\right]
$$
and second equality \eqref{111} follows immediately.
\end{proof}
\begin{thm}
The invertibility condition \eqref{26} holds if the vector group $\{\rho_i\}_{i=1}^{l}$ given by \eqref{111} is linearly independent.
\end{thm}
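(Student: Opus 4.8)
The plan is to verify \eqref{26} directly, by showing that the symmetric matrix $\mathbb{E}\!\left[\int_s^T\rho^{\top}(t)S^{-1}(t)\rho(t)\,dt\right]$ is strictly positive definite, evaluating the associated quadratic form on an arbitrary nonzero $k\in\mathbb{R}^l$. First I would record that this matrix is well-defined and that $S^{-1}$ is uniformly elliptic: since $P\in\mathcal{L}^{\infty}_{\mathcal{F}}(\Omega,C([s,T],\mathcal{S}^n))$ by Proposition~\ref{27} and $D$ is bounded, $S=I+D^{\top}PD$ is bounded above, which together with \eqref{16} yields constants $0<\varepsilon\le\Gamma<\infty$ with $\varepsilon\mathcal{I}_m\le S(t)\le\Gamma\mathcal{I}_m$, hence $\Gamma^{-1}\mathcal{I}_m\le S^{-1}(t)\le\varepsilon^{-1}\mathcal{I}_m$ for a.e. $t\in[s,T]$, $\mathbb{P}$-a.s.; and since each $\rho_i=\beta_i+B^{\top}R_i+D^{\top}r_i\in\mathcal{L}^2_{\mathcal{F}}([s,T],\mathbb{R}^m)$ (because $\beta_i$, $B$, $D$ are bounded and $(R_i,r_i)$ lies in the space from Proposition~\ref{27}), every entry $\mathbb{E}[\int_s^T\rho_i^{\top}S^{-1}\rho_j\,dt]$ is finite by Cauchy--Schwarz.

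Next, writing $\rho k=\sum_{i=1}^{l}k_i\rho_i$, I would use the lower bound on $S^{-1}$ to estimate
$$
k^{\top}\mathbb{E}\!\left[\int_s^T\rho^{\top}S^{-1}\rho\,dt\right]k=\mathbb{E}\!\left[\int_s^T(\rho k)^{\top}S^{-1}(\rho k)\,dt\right]\ge \Gamma^{-1}\,\Big\|\sum_{i=1}^{l}k_i\rho_i\Big\|^2_{\mathcal{L}^2_{\mathcal{F}}([s,T],\mathbb{R}^m)}.
$$
If $\{\rho_i\}_{i=1}^{l}$ is linearly independent in the Hilbert space $\mathcal{L}^2_{\mathcal{F}}([s,T],\mathbb{R}^m)$, then $k\ne 0^{l\times 1}$ forces $\sum_{i=1}^{l}k_i\rho_i\ne 0$ and the right-hand side is strictly positive; hence the quadratic form is positive on all of $\mathbb{R}^l\setminus\{0^{l\times 1}\}$, which is exactly \eqref{26}. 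An equivalent route avoids $\Gamma$: $S^{-1}(t)\in\mathcal{S}_{++}^m$ makes $(\rho k)^{\top}S^{-1}(\rho k)$ strictly positive exactly on $\{\rho k\ne 0\}$, a set of positive $dt\otimes d\mathbb{P}$-measure whenever the $\rho_i$ are linearly independent, so the expectation of the integral is again strictly positive.

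I do not expect a serious obstacle here: once the two-sided bound on $S$ is in hand the computation is immediate, and the statement amounts to the fact that a Gram-type matrix built from the uniformly elliptic weight $S^{-1}$ inherits positive definiteness from the linear independence of the generating vectors. The only point deserving attention is the reading of the hypothesis: ``linear independence of $\{\rho_i\}$'' must be understood as linear independence of the processes $\rho_i$ given by \eqref{111} as elements of $\mathcal{L}^2_{\mathcal{F}}([s,T],\mathbb{R}^m)$, i.e. that $\sum_i k_i\rho_i=0$ in $\mathcal{L}^2_{\mathcal{F}}([s,T],\mathbb{R}^m)$ only when $k=0^{l\times 1}$; under this reading the implication above is clear, and \eqref{111} supplies the explicit expressions to which it is applied.
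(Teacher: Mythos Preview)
Your proposal is correct and follows essentially the same approach as the paper: the paper's proof is a one-liner stating that the conclusion ``follows from the fact that $S^{-1}(t)\in\mathcal{S}_{++}^{m}$ a.e.\ $t\in[s,T]$, $\mathbb{P}$-a.s.\ immediately,'' and your argument simply unpacks this by writing the quadratic form $k^{\top}\mathbb{E}[\int_s^T\rho^{\top}S^{-1}\rho\,dt]k$ explicitly and invoking linear independence of the $\rho_i$. Your added care about well-definedness and the two-sided bound on $S$ is extra detail beyond what the paper records, but the core idea is identical.
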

\begin{proof}
It follows from the fact that $S^{-1}(t)\in \mathcal{S}_{++}^{l}$ a.e. $t\in[s,T]$, $\mathbb{P}$-a.s. immediately.
\end{proof}
Similarly, we have the following result, by which Assumption \ref{17} can be more easily verified.
\begin{thm}
If  $$\widetilde{\mathfrak{L}}_t(\alpha_i,\gamma_i)=W(T)\gamma_i+\int_t^T W(\tau)\alpha_i(\tau)d\tau$$
admits the Malliavin derivative $\mathfrak{D}_t\widetilde{\mathfrak{L}}_t(\alpha_i,\beta_i,\gamma_i)$ with $\mathbb{E}\left[\int_s^T|\mathfrak{D}_t\widetilde{\mathfrak{L}}_t(\alpha_i,\beta_i,\gamma_i)|^2dt\right]<\infty$, where $W$ satisfies
\begin{equation*}
\begin{cases}
dW=WA^{\top}dt+WC^{\top}dB(t),\quad t\in[s,T],\\
W(s)=\mathcal{I}_n.
\end{cases}
\end{equation*}
Then
\begin{equation*}
\widetilde{\rho}_i=\beta_i+D^{\top}W^{-1}\mathbb{E}\left[\mathfrak{D}_{\cdot}\widetilde{\mathfrak{L}}_{\cdot}(\alpha_i,\gamma_i)\Big|\mathcal{F}_{\cdot}\right]
+(B^{\top}-D^{\top}C^{\top})W^{-1}\mathbb{E}\left[\widetilde{\mathfrak{L}}_{\cdot}(\alpha_i,\gamma_i)\Big|\mathcal{F}_{\cdot}\right].
\end{equation*}
\end{thm}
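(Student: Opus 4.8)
The plan is to repeat, almost verbatim, the argument used to prove Theorem~\ref{21}, with the feedback correction $L^{\top}S^{-1}$ switched off throughout: the role of $V$ in \eqref{35} is now played by $W$, and the BSDE \eqref{28} is replaced by the BSDE
\begin{equation*}
d\widetilde{R}_{i}=-\left[A^{\top}\widetilde{R}_{i}+C^{\top}\widetilde{r}_{i}+\alpha_{i}\right]dt+\widetilde{r}_{i}\,dB(t),\qquad \widetilde{R}_{i}(T)=\gamma_{i},
\end{equation*}
introduced just after the state equation \eqref{SDE1}, whose coefficient $\widetilde{\rho}_{i}=\beta_{i}+B^{\top}\widetilde{R}_{i}+D^{\top}\widetilde{r}_{i}$ is the quantity to be expressed. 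First I would record that $W$ is invertible: letting $\widehat{W}$ solve the linear SDE $d\widehat{W}=[(C^{\top})^{2}-A^{\top}]\widehat{W}\,dt-C^{\top}\widehat{W}\,dB(t)$ with $\widehat{W}(s)=\mathcal{I}_{n}$, which has a unique solution in $\mathcal{L}^{2}_{\mathcal{F}}(\Omega,C([s,T],\mathbb{R}^{n\times n}))$ by standard SDE theory, an application of the It\^{o} formula to $W\widehat{W}$ shows $W(t)\widehat{W}(t)=\mathcal{I}_{n}$ a.e.\ $t\in[s,T]$, $\mathbb{P}$-a.s., so $W^{-1}=\widehat{W}$, exactly as $(V,V_{0})$ is handled in \eqref{35}.

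Next I would apply the It\^{o} formula to $W\widetilde{R}_{i}$. Using $dW=WA^{\top}dt+WC^{\top}dB(t)$, the drift $WA^{\top}\widetilde{R}_{i}$ cancels the one coming from $W\,d\widetilde{R}_{i}$, and the quadratic-covariation term $WC^{\top}\widetilde{r}_{i}\,dt$ cancels the term $-WC^{\top}\widetilde{r}_{i}\,dt$ of $W\,d\widetilde{R}_{i}$, leaving
\begin{equation*}
d(W\widetilde{R}_{i})=-W\alpha_{i}\,dt+W\left[C^{\top}\widetilde{R}_{i}+\widetilde{r}_{i}\right]dB(t).
\end{equation*}
Integrating on $[t,T]$, using $\widetilde{R}_{i}(T)=\gamma_{i}$, and taking $\mathbb{E}[\,\cdot\mid\mathcal{F}_{t}]$ (the stochastic integral being a true martingale under the stated integrability hypothesis) gives $W(t)\widetilde{R}_{i}(t)=\mathbb{E}[\widetilde{\mathfrak{L}}_{t}(\alpha_{i},\gamma_{i})\mid\mathcal{F}_{t}]$, hence $\widetilde{R}_{i}=W^{-1}\mathbb{E}[\widetilde{\mathfrak{L}}_{\cdot}(\alpha_{i},\gamma_{i})\mid\mathcal{F}_{\cdot}]$.

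Finally, applying the Clark--Ocone formula (\cite[Theorem~4.1]{nunno2009malliavin}) to $\widetilde{\mathfrak{L}}_{t}(\alpha_{i},\gamma_{i})$ exactly as in the proof of Theorem~\ref{21}, and comparing the resulting martingale representation with the stochastic integral obtained above, uniqueness of the It\^{o} integrand yields $\mathbb{E}[\mathfrak{D}_{\tau}\widetilde{\mathfrak{L}}_{\tau}(\alpha_{i},\gamma_{i})\mid\mathcal{F}_{\tau}]=W(\tau)[C^{\top}\widetilde{R}_{i}(\tau)+\widetilde{r}_{i}(\tau)]$, so that
\begin{equation*}
\widetilde{r}_{i}=W^{-1}\mathbb{E}\left[\mathfrak{D}_{\cdot}\widetilde{\mathfrak{L}}_{\cdot}(\alpha_{i},\gamma_{i})\mid\mathcal{F}_{\cdot}\right]-C^{\top}W^{-1}\mathbb{E}\left[\widetilde{\mathfrak{L}}_{\cdot}(\alpha_{i},\gamma_{i})\mid\mathcal{F}_{\cdot}\right].
\end{equation*}
Substituting this expression and the one for $\widetilde{R}_{i}$ into $\widetilde{\rho}_{i}=\beta_{i}+B^{\top}\widetilde{R}_{i}+D^{\top}\widetilde{r}_{i}$ and grouping the coefficient of $W^{-1}\mathbb{E}[\widetilde{\mathfrak{L}}_{\cdot}(\alpha_{i},\gamma_{i})\mid\mathcal{F}_{\cdot}]$ gives the asserted identity. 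The points needing care are the invertibility of $W$ (a routine It\^{o} computation mirroring \eqref{35}, where one must respect the order of matrix multiplication in the correction term) and the legitimacy of the conditional-expectation and Clark--Ocone steps, for which the hypothesis $\mathbb{E}[\int_{s}^{T}|\mathfrak{D}_{t}\widetilde{\mathfrak{L}}_{t}|^{2}dt]<\infty$ is precisely what is required; since the whole argument is the $L^{\top}S^{-1}\equiv 0$ specialization of Theorem~\ref{21}, I do not expect any genuine obstacle.
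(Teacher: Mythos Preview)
Your proposal is correct and matches the paper's approach: the paper gives no independent proof of this theorem, introducing it with ``Similarly, we have the following result,'' i.e.\ precisely the $L^{\top}S^{-1}\equiv 0$ specialization of the proof of Theorem~\ref{21} that you have written out in detail.
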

Two illustrative examples for Theorem \ref{21} are given as follows.
\begin{example}
Consider the controlled systems
\begin{equation*}
\begin{cases}
dX(t)=\frac{1}{2}X(t)dt+\left[X(t)+u(t)\right]dB(t), \quad t\in[0,1],\\
X(0)=\xi=(1,1)^{\top}.
\end{cases}
\end{equation*}
with the payoff functional
\begin{equation*}
J(0,(1,1)^{\top},u)=\mathbb{E}\left[\displaystyle{\int_0^1}\left[2\|X(t)\|^2+2\langle X(t), u(t)\rangle+\|u(t)\|^2\right]dt
\right],
\end{equation*}
where the controlled pair $(X,u)\in\mathcal{L}^2_{\mathcal{F}}([0,1],\mathbb{R}^2)\times\mathcal{L}^2_{\mathcal{F}}([0,1],\mathbb{R}^2)$ satisfies the following mixed equality and inequality constraints
$$
\left\langle X(1),e^{B(1)-1}\varsigma_1\right\rangle_{\mathcal{L}^2_{\mathcal{F}_{T}}(\Omega,\mathbb{R}^n)}\leq a_1,\quad \left \langle X(1),e^{B(1)-1}\varsigma_2\right\rangle_{\mathcal{L}^2_{\mathcal{F}_{T}}(\Omega,\mathbb{R}^n)}\leq a_2.
$$
with $\varsigma_1=(1,0)^{\top}$, $\varsigma_2=(0,1)^{\top}$. Find $u^*$ to minimize the payoff function is a case of Problem \ref{2}. We have from \eqref{4} and \eqref{15} that $P(t)=(e^{1-t}-1)\mathcal{I}_2$, $L(t)=S(t)=e^{1-t}\mathcal{I}_2$, $\Lambda(t)=0^{2\times2}$, $Q(t)=\pi(t)=0^{2\times 1}$. According to Theorem \ref{21}, one has $R_i(t)=r_i(t)=\rho_i(t)=e^{B(t)-t}\varsigma_i$ and $R_i(0)=\varsigma_i$ ($ i\in\{1,2\}$). Hence
$\delta_i=1-\lambda_i\langle e^{1-t}\rho_i,\rho_i\rangle_{\mathcal{L}^2_{\mathcal{F}}([s,T],\mathbb{R}^m)}=1-(e-1)\lambda_i.$

Next, by the KKT condition \eqref{18}, we have
\begin{equation*}
\begin{cases}
\widehat{u}(t)=-\widehat{X}(t)-e^{B(t)-t}\lambda^*,\quad\lambda^*_1\geq0,\quad\lambda^*_1\geq\frac{1-a_1}{2}\\
\lambda^*_1(1-\lambda^*_1)=0,\quad 1-(e-1)\lambda^*_2=a_2
\end{cases}
\end{equation*}
and so  $\lambda^*=(\max\left\{0,\frac{1-a_1}{e-1}\right\},\frac{1-a_2}{e-1})^{\top}$. Finally, by $\mathbb{E}\left[\int_0^1\rho^{\top} (t) S^{-1}(t)\rho(t) dt\right]=(e-1)\mathcal{I}_2\in \mathcal{S}_{++}^2$ and Theorems \ref{41} and \ref{42}, the unique feedback optimal control reads
$$
\widehat{u}(t)=-\widehat{X}(t)-e^{B(t)-t}
\begin{bmatrix}
\max\left\{0,\frac{1-a_1}{e-1}\right\}\\
\frac{1-a_2}{e-1}
\end{bmatrix},\quad t\in[0,1]
$$
and the value function is
$
\widetilde{J}(0,(1,1)^{\top},\lambda^*,u^{*}(\lambda^*))=2(e-1)+\langle2\xi-2a-(e-1)\lambda,\lambda\rangle.
$
\begin{figure}[htbp]
\centering
\includegraphics[height=7cm, width=14cm]{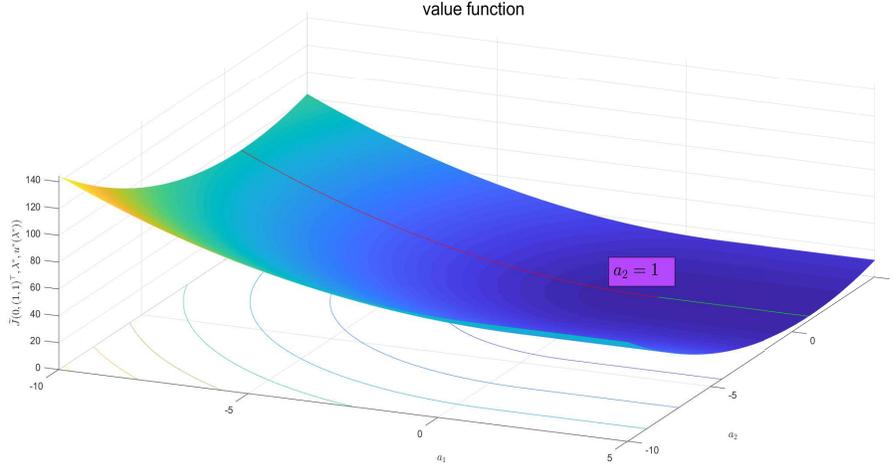}
\caption{Change in $\widetilde{J}(0,(1,1)^{\top},\lambda^*,u^{*}(\lambda^*))$ with $a_{1}$ and $a_2$.}
\label{Fig.1}
\end{figure}
\end{example}

Figure 1 shows that: (i) for fixed $a_1$, the value function $\widetilde{J}(0,(1,1)^{\top},\lambda^*,u^{*}(\lambda^*))$ achieves its minimum when $a_2=1$, (ii) for fixed $a_2$, the value function $\widetilde{J}(0,(1,1)^{\top},\lambda^*,u^{*}(\lambda^*))$ is gradually decreasing with the growth of $a_1$. This indicates that strengthening the constraints results in a growth of the cost.

\begin{example}
Consider the controlled systems
\begin{equation*}
\begin{cases}
dX(t)=[-\frac{1}{2}X(t)+u(t)]dt+X(t)dB(t), \quad t\in[0,1],\\
X(0)=\xi=(1,1)^{\top}.
\end{cases}
\end{equation*}
with the payoff functional
\begin{equation*}
J(0,(1,1)^{\top},u)=\mathbb{E}\left[\displaystyle{\int_0^1}\|u(t)\|^2dt\right]+\mathbb{E}\left[\|X(1)\|^2+2\langle X(1),2e^{B(1)-1}\xi\rangle
\right],
\end{equation*}
where the controlled pair $(X,u)\in\mathcal{L}^2_{\mathcal{F}}([0,1],\mathbb{R}^2)\times\mathcal{L}^2_{\mathcal{F}}([0,1],\mathbb{R}^2)$ satisfies the following affine constraints
$$
\left\langle X(T),2e^{B(1)-1}\varsigma_1\right\rangle_{\mathcal{L}^2_{\mathcal{F}_{T}}(\Omega,\mathbb{R}^n)}\leq a_1,\quad \left \langle X(T),2e^{B(1)-1}\varsigma_2\right\rangle_{\mathcal{L}^2_{\mathcal{F}_{T}}(\Omega,\mathbb{R}^n)}\leq a_2.
$$
with $\varsigma_1=(1,0)^{\top}$, $\varsigma_2=(0,1)^{\top}$. Now we would like to find the optimal control $u^*$ to minimize the payoff functional. It follows from \eqref{4} and \eqref{15} that $P(t)=L(t)=\frac{1}{2-t}\mathcal{I}_2$, $S(t)=\mathcal{I}_2$, $\Lambda(t)=0^{2\times2}$. According to Theorem \ref{21}, one has $R_i(t)=r_i(t)=\rho_i(t)=\frac{2}{2-t}e^{B(t)-t}\varsigma_i$ and $R_i(0)=\varsigma_i$. Similarly, we have $Q(t)=\pi(t)=\frac{2}{2-t}e^{B(t)-t}\xi$. Hence
$\delta_i=1-\left\langle Q,\rho_i\right\rangle_{\mathcal{L}^2_{\mathcal{F}}([s,T],\mathbb{R}^m)}-\lambda_i\langle\rho_i,\rho_i\rangle_{\mathcal{L}^2_{\mathcal{F}}([s,T],\mathbb{R}^m)}=-1-2\lambda_i.$

Next, by the KKT condition \eqref{18}, we have
\begin{equation*}
\begin{cases}
\widehat{u}(t)=-\frac{1}{2-t}(\widehat{X}(t)+2e^{B(t)-t}\lambda^*),\quad\lambda^*_1\geq 0,\quad\lambda^*_1\geq-\frac{1+a_1}{2},\quad\lambda^*_2\geq 0,\quad\lambda^*_2\geq-\frac{1+a_2}{2},\\
\lambda^*_1(1+2\lambda^*_1+a_1)=0,\quad \lambda^*_2(1+2\lambda^*_2+a_2)=0
\end{cases}
\end{equation*}
and so $\lambda^*=(\max\left\{0,-\frac{1+a_1}{2}\right\},\max\left\{0,-\frac{1+a_2}{2}\right\})^{\top}$. Finally, by $\mathbb{E}\left[\int_0^1\rho^{\top} (t) S^{-1}(t)\rho(t) dt\right]=2\mathcal{I}_2\in \mathcal{S}_{++}^2$, $|\frac{1}{2-t}|\leq1$, and Theorems \ref{41} and \ref{42},  the unique feedback optimal control reads
$$
\widehat{u}(t)=-\frac{1}{2-t}\left(\widehat{X}(t)+ e^{B(t)-t}
\begin{bmatrix}
\max\left\{2,1-a_1\right\}\\
\max\left\{2,1-a_2\right\}
\end{bmatrix}\right),\quad t\in[0,1]
$$
and the value function is
$
\widetilde{J}(0,(1,1)^{\top},\lambda^*,u^{*}(\lambda^*))=4-2\langle a+\xi+\lambda^*,\lambda^*\rangle.
$
\begin{figure}[htbp]
\centering
\includegraphics[height=7cm, width=14cm]{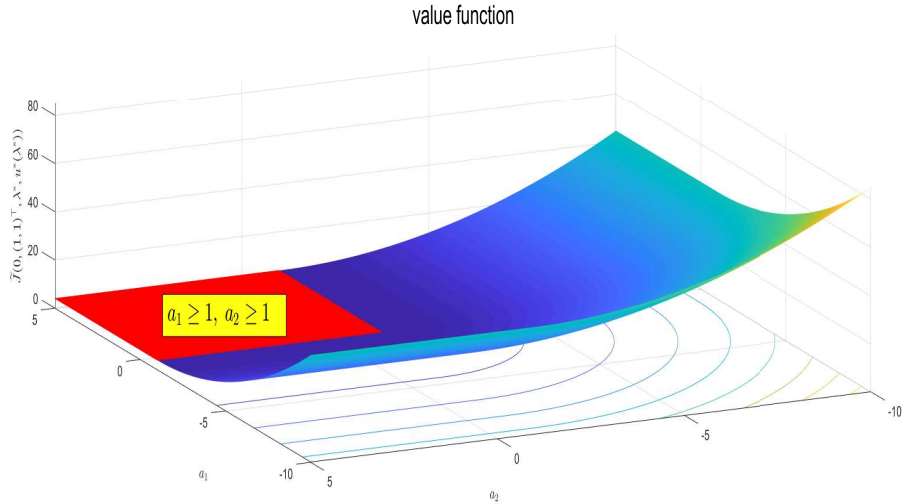}
\caption{Change in $\widetilde{J}(0,(1,1)^{\top},\lambda^*,u^{*}(\lambda^*))$ with $a_{1}$ and $a_2$.}
\label{Fig.2}
\end{figure}

Figure 2 shows that the value function $\widetilde{J}(0,(1,1)^{\top},\lambda^*,u^{*}(\lambda^*))$ is gradually reducing with the growth of both $a_1$ and $a_2$, and achieves its minimum when $a_1\geq1,\;a_2\geq1$, which indicates that the reinforcement of the constraints results in an increasing of the cost.
\end{example}

\section{Conclusions}
This paper is devoted to studying the SLQ problems with affine constraints in random coefficients case. By solving the relaxed SLQ problem, the dual problem of original problem is obtained and the state feedback form of the open-loop optimal control is given for the dual problem. Then, the strong duality of the dual problem is obtained under the Slater condition and the invertibility assumption is introduced for ensuring the uniqueness of solutions to the dual problem. Finally, the KKT condition is established  for solving the original problem and a sufficient condition is provided for guaranteeing the invertibility assumption. Two examples are provided to illustrate the effectiveness of the main results.

It is worth mentioning extension of our problem formulation, including the SLQ problems with delay/jumps/partial information \cite{li2020linear, moon2021indefinite, zhang2021linear}, promises to be interesting and important topics. Moreover, the stochastic Nash/Stackelberg differential games with affine constraints call for further research. We plan to address these problems as we continue our research.

\section*{Acknowledgements}
The authors are grateful to  Professor Qi L\"{u} for his constructive comments, which helps us to improve the paper.

\end{document}